\theoremstyle{definition}
\newtheorem{definition}{Definition}
\newtheorem{problem}{Problem}
\newtheorem{fact}{Fact}
\newtheorem{theorem}{Theorem}
\newcommand{\rI}{\operatorname{RI}}
\newcommand{\rII}{\operatorname{RI\!I}}
\newcommand{\rIII}{\operatorname{RI\!I\!I}}
\newcommand{\srII}{\operatorname{strong~RI\!I}}
\newcommand{\wrII}{\operatorname{weak~RI\!I}}
\newcommand{\srIII}{\operatorname{strong~RI\!I\!I}}
\newcommand{\wrIII}{\operatorname{weak~RI\!I\!I}}
\newcommand{\odd}{\operatorname{odd}}
\newcommand{\coh}{\operatorname{Coh}}
\begin{document}
\author{Noboru Ito}
\thanks{The work of N.~Ito was partially supported by a Waseda University Grant for Special Research Projects (Project number: 2015K-342) and Japanese-German Graduate Externship.  N.~Ito was a project researcher of Grant-in-Aid for Scientific Research (S) 24224002 (April 2016 -- March 2017).}
\address{Graduate School of Mathematical Sciences, The University of Tokyo, 3-8-1, Komaba, Meguro-ku, Tokyo, 153-8914, Japan}
\email{noboru@ms.u-tokyo.ac.jp}
%\email{noboru@moegi.waseda.jp}
\author{Yusuke Takimura}
\address{Gakushuin Boys' Junior High School, 1-5-1, Mejiro, Toshima-ku Tokyo, 171-0031, Japan}
\email{Yusuke.Takimura@gakushuin.ac.jp}
%\email{Yusuke.Takimura@gakushuin.ac.jp}
\title{Thirty-two equivalence relations on knot projections}
%\date{\today}
\keywords{Knot projections; spherical curves; Reidemeister moves; homotopy}
\begin{abstract}
We consider 32 homotopy classifications of knot projections (images of generic immersions from a circle into a 2-sphere).  These 32 equivalence relations are obtained based on which moves are forbidden among the five type of Reidemeister moves.  We show that 32 cases contain 20 non-trivial cases that are mutually different.  To complete the proof, we obtain new tools, i.e., new invariants.  
\end{abstract}
\maketitle
\section{Introduction}
A {\it{knot projection}} is the image of a generic immersion from a circle into a $2$-sphere.  In particular, every self-intersection is a transverse double point, which is simply called a {\it{double point}}.  Every double point consists of two branches and thus,  if the two branches are given over/under information for every double point of a knot projection, we can then obtain a {\it{knot diagram}}.  

Several interesting homotopy classes have been considered by restricting the {\it{Reidemeister moves}} that consist of three types of local replacements of knot projections, as shown in Fig.~\ref{32f001} \cite{arnold, HY, IT_12, ITT, IT_w123, IT_circle, khovanov, polyak} (for other works, see \cite{ito}).  
\begin{figure}[htbp]
\begin{center}
\includegraphics[width=10cm]{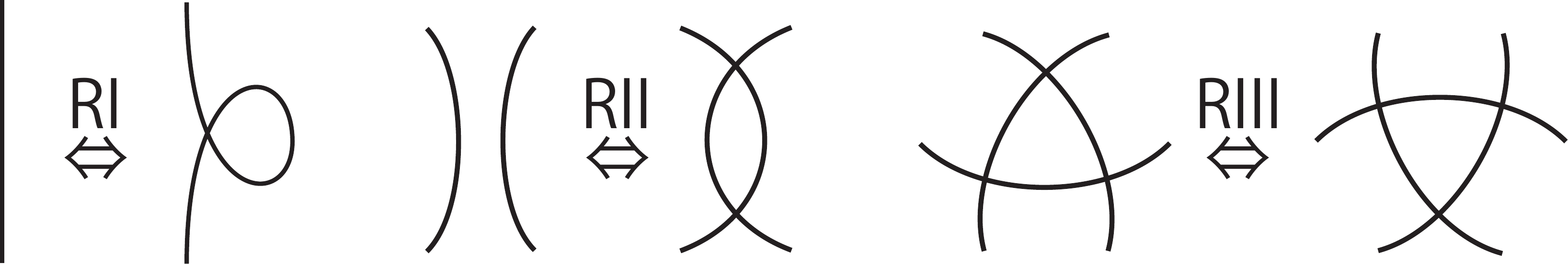}
\caption{Reidemeister moves}
\label{32f001}
\end{center}
\end{figure}
Because a knot projection is a single component, we consider five types of Reidemeister moves, namely, $\rI$, $\srII$, $\wrII$, $\srIII$, and $\wrIII$, which are the local replacements defined in Fig.~\ref{32f002}.  
\begin{figure}[htbp]
\begin{center}
\includegraphics[width=10cm]{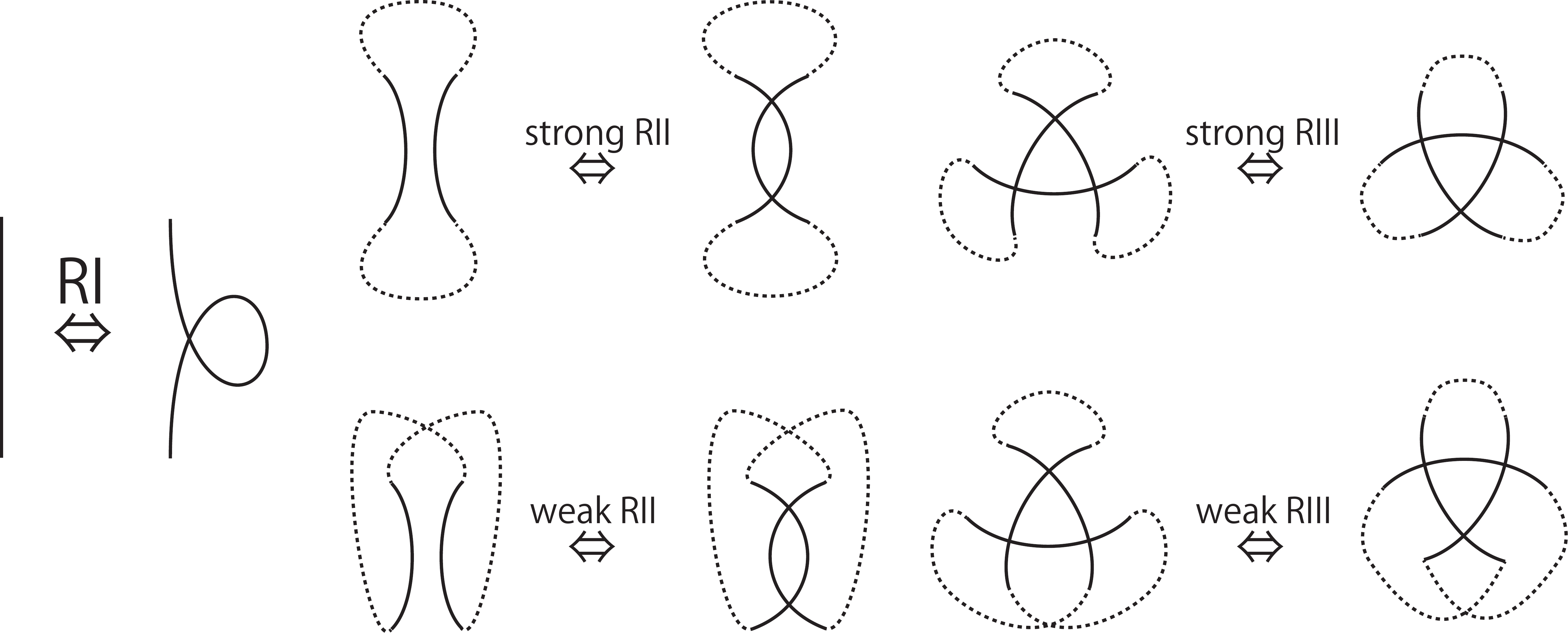}
\caption{RI, strong~RI\!I, weak~RI\!I, strong~RI\!I\!I, weak~RI\!I\!I.  Each dotted arc indicates the connection between branches.}
\label{32f002}
\end{center}
\end{figure}

We can consider 32 ($= 2^5$) equivalence classes by restricting the Reidemeister moves.  Naturally, we have Problem~\ref{p1} as below.  
\begin{problem}\label{p1}
\noindent(1) Which equivalence classes of knot projections are non-trivial?

\noindent(2) Which two equivalence relations on knot projections are independent?
\end{problem}
\noindent Theorem~\ref{main1} described in this paper solves Problem~\ref{p1}.  
\section{Main Result}
\begin{definition}\label{dfn1}
Let $\mathcal{R}$ $=$ $\{ \rI, \srII, \wrII, \srIII, \wrIII \}$ and let $\mathcal{S}$ be a subset of $\mathcal{R}$.  We say that two knot projections $P$ and $P'$ are $\mathcal{S}$-{\it{equivalent}} if they can be related by a finite sequence consisting of the elements of $\mathcal{S}$.  We denote this equivalence by $\sim_{\mathcal{S}}$.  
%Naturally, we consider
There are 32 ($= 2^5$) possibilities of type 
\[  \{ {\text{knot projections}} \} /\sim_{\mathcal{S}} \]
that is denoted by $\mathcal{C}_{\mathcal{S}}$.  
\end{definition}
\begin{theorem}\label{main1}
Let $\mathcal{C}_{\mathcal{S}}$ be as defined in Definition~\ref{dfn1}.   
%In the all the 
Among the 32 possible sets, 8 sets are trivial (i.e.,~all knot projections are equivalent to the trivial knot projection) and the remaining 24 sets are equivalent to the following 20 sets that are non-trivial and mutually different: 

\noindent $\mathcal{S}$ $=$ $\{ \srII, \wrII, \srIII, \wrIII \}$, $\{ \srII, \srIII, \wrIII \}$, $\{ \wrII, \srIII \}$, $\{ \wrII, \srIII, \wrIII \}$, $\{ \srII \}$, $\{ \wrII \}$, $\{ \srII, \wrII \}$, $\{ \wrII, \wrIII \}$, $\{ \srIII, \wrIII \}$, $\{ \srIII \}$, $\{ \wrIII \}$, $\{ \rI \}$, $\{ \rI, \srII \}$, $\{ \rI, \wrII \}$, $\{ \rI, \srII, \wrII \}$, $\{ \rI,$ $\srIII \}$, $\{ \rI, \wrIII \}$, $\{ \rI, \srIII, \wrIII \}$, $\{ \rI, \wrII, \wrIII \}$, or $\emptyset$.   
\end{theorem}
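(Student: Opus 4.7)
The 32 subsets of $\mathcal{R}$ form a Boolean lattice under inclusion, and $\sim_{\mathcal{S}}$ refines $\sim_{\mathcal{S}'}$ whenever $\mathcal{S}\subseteq\mathcal{S}'$. My plan is to decompose the theorem into three tasks: (i) identify the $8$ subsets $\mathcal{S}$ for which every knot projection is $\mathcal{S}$-equivalent to the trivial one, (ii) detect the $4$ subsets whose equivalence relation coincides with one of the $20$ listed subsets, and (iii) verify that the $20$ listed relations are pairwise distinct.

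\textbf{Trivial subsets.} My candidate list for the $8$ trivial subsets is those $\mathcal{S}$ containing $\rI$, at least one $\rII$-move, and at least one $\rIII$-move, with the sole exception of $\{\rI,\wrII,\wrIII\}$ which appears among the listed $20$. To prove triviality of these $8$, I would induct on the number of double points: $\rI$ removes monogons, while a \emph{strong} $\rII$ or \emph{strong} $\rIII$ in the presence of $\rI$ and the remaining flavor of $\rII$/$\rIII$ allows a standard bigon/triangle reduction, adapting Reidemeister's theorem for immersed circles on $S^{2}$ to the restricted move set. Non-triviality of the excluded case $\{\rI,\wrII,\wrIII\}$ should then follow from a mod-$2$ parity invariant attached to $\wrII$-bigons that $\wrIII$ cannot alter, matching its appearance in the listed $20$.

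\textbf{Redundancies.} The $4$ redundant subsets I expect are $\{\srII,\srIII\}$ and $\{\srII,\wrII,\srIII\}$, each simulating $\wrIII$ from $\srIII$ using $\srII$, together with $\{\srII,\wrIII\}$ and $\{\srII,\wrII,\wrIII\}$, each simulating $\srIII$ from $\wrIII$ using $\srII$. These reduce to two local move-generation identities: $\srII+\srIII$ produces $\wrIII$, and $\srII+\wrIII$ produces $\srIII$. Both are direct picture proofs: surround the $\rIII$-triangle in question by a pair of $\srII$ fingers, perform the available $\rIII$ on the enlarged triangle, and undo the fingers to recover the opposite-flavor $\rIII$.

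\textbf{Pairwise distinctness.} This is the main obstacle. For every covering pair $\mathcal{S}_{1}\subsetneq\mathcal{S}_{2}$ in the Hasse diagram of the $20$ listed subsets, I must exhibit two projections $P,P'$ with $P\sim_{\mathcal{S}_{2}}P'$ and $P\not\sim_{\mathcal{S}_{1}}P'$; incomparable pairs then follow by composition along the diagram. Several covering pairs should be handled by classical invariants: the parity of the number of double points separates $\rI$-free sets from $\rI$-inclusive ones, and Arnold-type or Khovanov-type chord invariants separate $\{\srIII\}$, $\{\wrIII\}$, and $\emptyset$ on suitable wave-like model projections. The remaining covering pairs appear to require \emph{new} invariants fine enough to discriminate $\srII$ from $\wrII$ or $\srIII$ from $\wrIII$ with only a minimal subset of moves allowed; they presumably take the form of weighted sums over double points or chord-diagram configurations with local parity data. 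Constructing these new invariants, proving them $\mathcal{S}_{1}$-invariant by direct local checks, producing minimal knot-projection pairs on which they change under the extra move(s) of $\mathcal{S}_{2}\setminus\mathcal{S}_{1}$, and then running this argument through every covering pair of the Hasse diagram is where most of the technical work and novelty of the paper should lie.
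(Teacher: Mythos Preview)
Your three-part decomposition matches the paper's, and your identification of the eight trivial sets and the four redundant sets is exactly right; the move-generation identities $\srII+\srIII\Rightarrow\wrIII$ and $\srII+\wrIII\Rightarrow\srIII$ are precisely what the paper uses (its Fig.~\ref{32f1}).  Two substantive differences and one genuine gap are worth flagging.

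\emph{Triviality.}  Your induction on the number of double points is not the paper's route and is awkward to make rigorous: a knot projection need not contain a monogon, bigon, or triangle, so there is no direct reduction step.  The paper instead shows that each of the three minimal candidates $\{\rI,\srII,\srIII\}$, $\{\rI,\srII,\wrIII\}$, $\{\rI,\wrII,\srIII\}$ \emph{generates} all five moves (the same Fig.~\ref{32f1} also gives $\rI+\wrII+\srIII\Rightarrow\srII$ and $\rI+\srII+\wrIII\Rightarrow\wrII$), after which classical Reidemeister for spherical curves finishes it.  This is simpler and avoids the missing-face problem.

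\emph{Non-triviality of $\{\rI,\wrII,\wrIII\}$.}  Your proposed ``mod-$2$ parity of $\wrII$-bigons'' does not work: $\rI$ and $\wrIII$ both interact with incoherent $2$-gons, and no such naive count is invariant.  The paper's invariant here is far subtler: $W(P)=tr(P)-2g(P)$, the trivializing number minus twice the canonical genus (Fact~\ref{fact_w123}), evaluated on the projection $7_4$.

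\emph{Pairwise distinctness.}  This is where your plan is essentially a placeholder, and where the paper's content lies.  The paper does \emph{not} walk the Hasse diagram covering pair by covering pair.  It first splits the $20$ cases by three yes/no conditions---Condition~1: $\rI\in\mathcal{S}$; Condition~2: the class of the trivial projection is a singleton (detected by the new invariant $C(P)$, Theorem~\ref{thm3}); Condition~3: a specific ``flower'' projection $P_F$ is trivial---and then separates the resulting small groups using a fixed catalog of test curves ($\infty$, $3_1$, $4_1$, $5_1$, $7_4$, $P_Y$, $P_F$, $P_C$) against a mixture of known invariants ($s(P)$, $|\tau(P)|$, $J^+_S$, $W(P)$, the reduced forms $P^{r}$, $P^{1r}$, $P^{2r}$, $P^{sr}$, $P^{wr}$) and two \emph{new} ones introduced for this purpose: the ``odd coherent number'' $\coh^{\odd}$ (Theorem~\ref{thm2}), which separates $\{\wrII,\srIII\}$ from $\{\wrII,\srIII,\wrIII\}$, and the reduced forms $P^{2sr}$, $P^{2wr}$ (Theorem~\ref{thm_s2w2}).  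Your description ``weighted sums over double points or chord-diagram configurations with local parity data'' does not anticipate these; in particular $\coh^{\odd}$ is a face-count invariant, not a chord-diagram one.  Until these invariants (or substitutes) are actually constructed and checked, the distinctness claim is unproved.
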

\section{Invariants of knot projections}
\subsection{New tools---new invariants}
In this section, we introduce a new invariant $\coh^{\odd}(P)$ for a knot projection $P$ under $\wrII$ and $\srIII$ to detect one of the two cases: $\mathcal{C}_{\{ \wrII,~\srIII \}}$ and $\mathcal{C}_{\{ \wrII,~\srIII,~\wrIII \}}$.  
%Thus, if we define an invariant on $\mathcal{C}_{\rI, \srIII, \wrIII}$, .  

%Let $P$ be a knot projection.  We give $P$ an orientation.  
Let $P$ be a knot projection with an arbitrary orientation.  
If the orientation induces an orientation of an $n$-gon of $P$ (i.e., the orientations of $n$ edges are coherent), the $n$-gon is called a {\it{coherent}} $n$-gon.  An $n$-gon that is not coherent is called an {\it{incoherent}} $n$-gon.  The sum of the number of coherent $(2m+1)$-gons ($m \in \mathbb{Z}_{\ge 0}$) is called the {\it{odd coherent number}}.  We set the function $\coh^{\odd}$ from the set of knot projections to $\{0, 1\}$ such that
\[
\coh^{\odd}(P) = 
\begin{cases}
0~~{\text{if odd coherent number is}}~0, \\
1~~{\text{if odd coherent number is not}}~0.
\end{cases}  
\]
By definition, $\coh^{\odd}(P)$ does not depend on the choice of the orientation of $P$.  
\begin{theorem}\label{thm2}
Let $P$ be a knot projection.  Then, $\coh^{\odd}(P)$ is invariant under $\wrII$ and $\srIII$.  
\end{theorem}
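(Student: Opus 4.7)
The plan is to verify the invariance of $\coh^{\odd}$ under $\wrII$ and under $\srIII$ separately, by analyzing how the faces of the knot projection transform under each move and exhibiting a correspondence between the coherent odd-gons of $P$ and those of the modified projection $P'$. We fix an orientation on $P$; since $\coh^{\odd}$ is orientation-independent, this is harmless.

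For a $\wrII$ move, I would examine the small disk containing the move, in which the two strands run in the same direction (the feature distinguishing $\wrII$ from $\srII$). The new bigon $B$ is a $2$-gon, hence even and irrelevant to the odd coherent number. The inner face $F_M$ squeezed between the two parallel strands has two boundary edges, one from each strand, inducing opposite boundary directions on $F_M$, so $F_M$ is incoherent both before and after the move regardless of its other boundary edges (and even in the case where $F_M$ splits into two global faces under the move, each piece still inherits one edge from each strand and so remains incoherent). Each of the two outer faces adjacent to $B$ has its boundary modified by replacing one edge with three consecutive edges (two subpieces of the old strand, together with a crossing piece of the other strand); a direct check using the common orientation of the two strands shows that all three new edges induce on the face the same boundary direction as the edge they replace. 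Thus the parity of the edge count changes by $2$ and the coherence status is preserved, which yields the desired bijection between coherent odd-gons before and after $\wrII$.

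For $\srIII$, the move exchanges a coherent triangle $T$ for another coherent triangle $T'$: both are coherent $3$-gons, contributing one coherent odd-gon each, and correspond to each other under the bijection. For each of the remaining faces whose boundary is affected (the faces meeting $T$ along an edge, and the faces meeting $T$ only at a corner), I would trace through the local picture of $\srIII$ to verify that the rearrangement of boundary edges preserves both the parity of the edge count and the coherence status of the face. The strong condition controls precisely this: it forces the three edges of the triangle to induce a common boundary direction, and this common direction is what ensures that the coherence test of each adjacent face survives the rearrangement.

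The main obstacle will be the detailed case analysis for $\srIII$, since the permutation of adjacent faces under an $\srIII$ move depends on the relative orientations of the three strands, and each sub-case must be verified separately. A secondary point is the verification for $\wrII$ that the coherence-preservation check works in all sub-cases (choice of bulging strand and direction of the bulge), but each of these reduces to the same local picture after an appropriate symmetry of the disk.
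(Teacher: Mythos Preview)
Your treatment of $\wrII$ is essentially sound and parallels the paper's argument (the paper phrases it as ``$\coh^{\odd}=0$ is preserved'' rather than as a bijection, but the content is the same: the middle face is forced to be incoherent, and the two outer faces change edge count by~$2$).

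The genuine gap is in the $\srIII$ case. Your assertion that the rearrangement ``preserves both the parity of the edge count and the coherence status'' of each surrounding face is false. Under any $\rIII$, each of the three faces sharing an edge with the central triangle $T$ loses exactly one corner (it becomes a vertex-opposite face of the new triangle $T'$), while each of the three faces meeting $T$ only at a vertex gains one corner (it becomes edge-adjacent to $T'$). Thus an $n$-gon becomes an $(n\mp 1)$-gon and the parity flips for all six neighbouring faces. Coherence is not stable either: a direct check shows that every edge-adjacent face of a \emph{coherent} triangle is locally incoherent (two of its three local boundary arcs induce opposite directions), whereas after the move that same face, now vertex-adjacent, has its two remaining local arcs inducing the \emph{same} direction and may become globally coherent. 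So no face-by-face bijection between coherent odd-gons exists in general, and in fact the odd coherent \emph{number} is not a $\srIII$-invariant.

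The paper sidesteps all of this by exploiting that $\coh^{\odd}$ is only $\{0,1\}$-valued: the central triangle of a $\srIII$ is a coherent $3$-gon on \emph{both} sides of the move, so $\coh^{\odd}(P)=\coh^{\odd}(P')=1$ whenever a single $\srIII$ relates $P$ and $P'$. That one-line observation is what the strong condition actually buys, and it replaces your entire proposed case analysis.
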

\begin{proof}
\noindent $\bullet$ (Invariance under $\srIII$) 
%As shown in Fig.~\ref{32f16}, 
A single $\srIII$ between two knot projections $P$ and $P'$ preserves the condition $\coh^{\odd}(P)$ $=$ $1$ $=$ $\coh^{\odd}(P')$ (see Fig.~\ref{32f002}).

\noindent $\bullet$ (Invariance under $\wrII$)
%See Fig.~\ref{32f16}.  
\begin{figure}[htbp]
\begin{center}
\includegraphics[width=7cm]{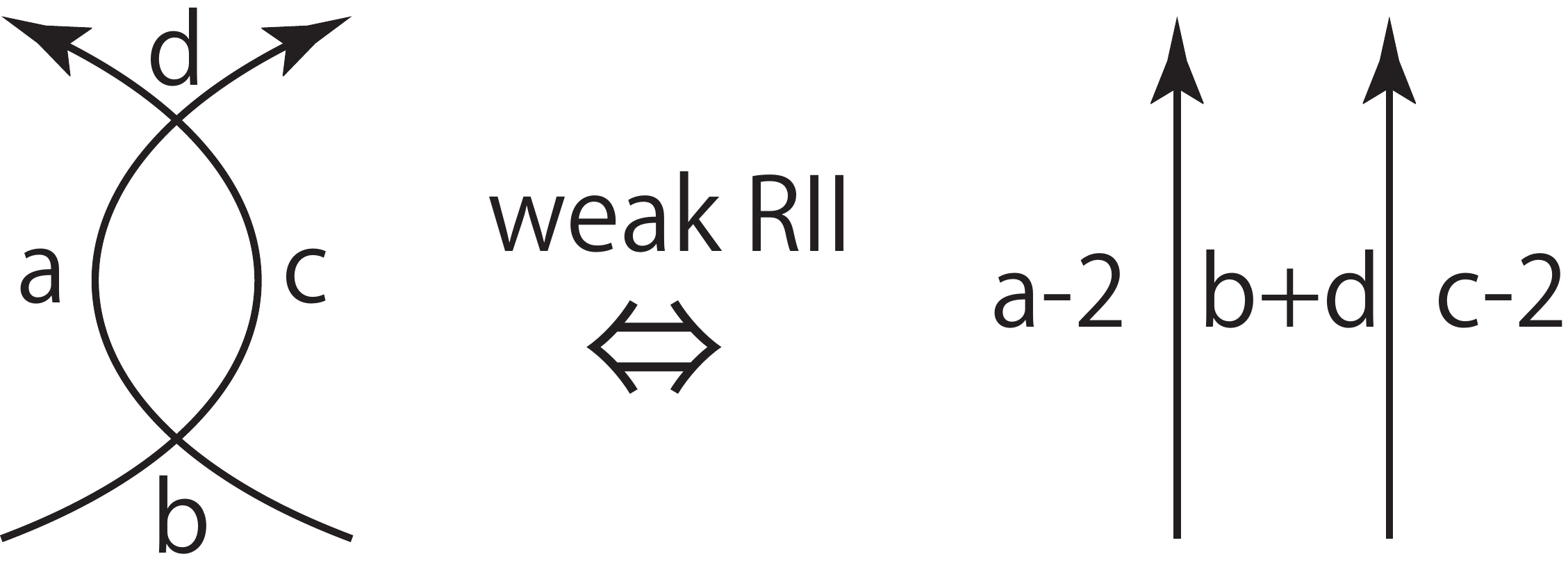}
\caption{A single $\wrII$ and the sides of $a$-, $b$-, $c$-, and $d$-gons}
\label{32f16}
\end{center}
\end{figure}
Suppose that $a (\ge 2)$, $b (\ge 1)$, $c (\ge 2)$, and $d (\ge 1)$ are integers.  The left (resp.~right) part of Fig.~\ref{32f16} represents the local situation of a knot projection $P$ (resp.~$P'$).  Assume that $\coh^{\odd}(P)=0$.  Then, $a$-gon (resp.~$c$-gon) is an incoherent $a$-gon (resp.~$c$-gon) or coherent even $a$-gon (resp.~$c$-gon), i.e., a coherent $a$-gon (resp.~$c$-gon) where $a$ (resp.~$c$) is an even integer.  Any other polygon of $P'$ is an incoherent or coherent even $n$-gon
 %that does not appears in Fig.~\ref{32f16} is not a coherent $n$-gon 
 for some $n$.  This is because $a$ $\equiv$ $a-2$ (mod $2$) and $c$ $\equiv$ $c-2$ (mod $2$).  
 %The reason is as follows.  $(a-2)$-gon (resp.~$(c-2)$-gon) is not a coherent $(a-2)$-gon (resp.~$(c-2)$-gon) in $P'$.  
 If the $(b+d)$-gon in $P'$ is a coherent $(b+d)$-gon, which is a boundary of a disk, we cannot apply a single $\wrII$ to obtain $P$; this indicates that there is a contradiction.  Thus, $(b+d)$-gon is an incoherent $(b+d)$-gon.  Then, $\coh^{\odd}(P')=0$.  Similarly, if $\coh^{\odd}(P')=0$, it can be easily seen that $\coh^{\odd}(P)=0$.  As a result, $\coh^{\odd}(P)$ $=$ $0$ $\Leftrightarrow$ $\coh^{\odd}(P')=0$ and then, $\coh^{\odd}(P)$ $=$ $\coh^{\odd}(P')$.  
\end{proof}
Next, we introduce an invariant $C(P)$ in a similar fashion.  
For a knot projection $P$, $c(P)$ denotes the number of double points.  We define a function from the set of all knot projections to $\{0, 1\}$ such that
\[
C(P)=
\begin{cases}
0~~~{\text{if}}~c(P) = 0,\\
1~~~{\text{if}}~c(P) \neq 0.
\end{cases}
\]
\begin{theorem}\label{thm3}
$C(P)$ is invariant under $\wrII$, $\wrIII$, and $\srIII$.  
\end{theorem}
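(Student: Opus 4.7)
The plan is to handle the three moves separately.  For $\srIII$ and $\wrIII$, the local picture of each move contains exactly three crossings on both sides, so the number of crossings is preserved: $c(P)=c(P')$, and hence $C(P)=C(P')$ immediately.

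For $\wrII$, each application changes $c(P)$ by exactly $\pm 2$.  Thus invariance of $C$ can fail only when $\wrII$ relates a $2$-crossing projection to the trivial projection (or, by reversibility of $\wrII$, the trivial projection to a $2$-crossing projection).  To rule this out, I would invoke the same structural description used in the proof of Theorem~\ref{thm2} (see Figure~\ref{32f16}): any application of $\wrII$ requires the presence of a $(b+d)$-gon with $b\ge 1$ and $d\ge 1$, hence a genuine polygonal face of the projection whose boundary consists of at least two edges.  Since every edge of a knot projection is an arc between crossings, the existence of such a polygon requires the projection to contain at least one crossing.  The trivial projection has no crossings, and so it has no edges and no polygon of positive edge count; in particular it cannot contain a $(b+d)$-gon with $b+d\ge 2$.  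Therefore neither $P$ nor $P'$ can be the trivial projection when $\wrII$ is applicable, ruling out the problematic transition and completing the proof.

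The main delicate point I anticipate is giving a clean justification that the trivial projection has no polygonal face with $\ge 2$ edges.  This should follow directly from the fact that edges of a knot projection are, by definition, arcs between crossings, so a projection with $c(P)=0$ has no edges, and a fortiori no polygonal face of positive edge count; combined with the labeling $b,d\ge 1$ in Figure~\ref{32f16}, this yields the required contradiction.
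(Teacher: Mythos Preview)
Your proof is correct, and for the $\rIII$ moves it is exactly the paper's argument. For $\wrII$, however, you take a longer route than the paper does.

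The paper's entire proof is one sentence: a single $\wrII$, $\wrIII$, or $\srIII$ between $P$ and $P'$ preserves the condition $C(P)=1=C(P')$, ``as can be seen in Fig.~\ref{32f002}.'' The point is that the defining figure already encodes, via the dotted arcs, how the local strands are connected globally. For $\wrII$ the two arcs on the zero-crossing side are \emph{parallel} (traversed in the same direction by the oriented curve); the dotted connections then link the four boundary points of the local disk in an interleaving pattern, which forces at least one crossing outside the disk. Equivalently, the simple circle only admits $\srII$ (any two of its sub-arcs are antiparallel), so the trivial projection can never appear on either side of a $\wrII$.

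Your argument reaches the same conclusion by importing the face-labelling from the proof of Theorem~\ref{thm2}: the $(b+d)$-gon on the $P'$ side has $b+d\ge 2$ edges, hence at least one corner, hence $c(P')\ge 1$. This is valid, and your justification that the trivial projection carries no polygonal face with a positive edge count is fine. But it is a detour: you are invoking the combinatorics of Figure~\ref{32f16} to extract a fact that is already visible directly in the definition of $\wrII$ in Figure~\ref{32f002}. The payoff of the paper's approach is brevity and uniformity (all three moves handled by the same observation); the payoff of yours is that it makes the obstruction on the $P'$ side more explicit, at the cost of relying on an auxiliary figure and its labelling conventions.
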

\begin{proof}
A single $\wrII$, $\wrIII$, or $\srIII$ between two knot projections $P$ and $P'$ preserves the condition $C(P)$ $=$ $1$ $=$ $C(P')$, as can be seen in Fig.~\ref{32f002}.  
\end{proof}
%\begin{theorem}\label{thm3}
%$C(P)$ is invariant under $\wrII$, $\wrIII$, and $\srIII$.  
%\end{theorem}
\begin{theorem}\label{thm_s2w2}
Let $P$ be a knot projection.  A knot projection $P^{2sr}$ (resp.~$P^{2wr}$) with no coherent (resp.~incoherent) $2$-gons is obtained from $P$ only decreasing double points by a finite sequence consisting of $\srII$ (resp.~$\wrII$).  Two knot projections, $P^{2sr}$ (resp.~$P^{2wr}$) and ${P'}^{2sr}$ (resp.~${P'}^{2wr}$), are sphere isotopic if and only if $P$ and $P'$ are related by a finite sequence consisting of $\srII$ (resp.~$\wrII$).  
\end{theorem}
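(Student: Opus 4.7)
My plan is to treat the decreasing $\srII$ (resp.\ decreasing $\wrII$) moves as the one-step reductions of an abstract rewriting system on sphere-isotopy classes of knot projections, and then to deduce the theorem from Newman's lemma together with a local confluence analysis. The strong and weak statements are completely parallel, one swapping ``coherent $2$-gon'' with ``incoherent $2$-gon'' throughout, so I would handle them in tandem, pointing out the minor adjustments as they arise.

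For existence and termination, every decreasing $\srII$ strictly lowers $c(P)$, so iterating such moves from any $P$ must stop after finitely many steps, and the terminal projection has no coherent $2$-gon (otherwise a further decreasing $\srII$ would apply); this is a valid $P^{2sr}$, and the weak argument is identical with ``incoherent'' throughout. Uniqueness of $P^{2sr}$ up to sphere isotopy is then the content of Newman's lemma once local confluence is proved. So the technical heart is: given $Q_1 \leftarrow P \rightarrow Q_2$ coming from two decreasing $\srII$ moves on distinct coherent $2$-gons $B_1, B_2$ of $P$, to produce a knot projection $R$ reachable from both $Q_1$ and $Q_2$ by further decreasing $\srII$ moves up to sphere isotopy.

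To prove local confluence I would case-split on how $B_1$ and $B_2$ are situated on $S^2$: disjoint, sharing exactly one double point, sharing both double points, or sharing a bounding arc. The disjoint case is immediate, since the two moves are supported in disjoint disks and therefore commute. For the overlapping cases I would draw the local pictures explicitly and check that, after performing the move at $B_1$, either $B_2$ persists as a reducible coherent $2$-gon giving an $\srII$ down to the same target reached by the reversed order, or else $Q_1$ and $Q_2$ are already sphere-isotopic. This finite but exhaustive geometric case analysis, enumerating the ways two coherent (respectively incoherent) bigons can sit together on the sphere, is the main obstacle of the proof.

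Given uniqueness up to sphere isotopy, the ``if and only if'' statement is formal. If $P \sim_{\srII} P'$, then in any terminating confluent rewriting system the projections $P$ and $P'$ share a common normal form, hence $P^{2sr}$ and ${P'}^{2sr}$ are sphere-isotopic. Conversely, if $P^{2sr}$ and ${P'}^{2sr}$ are sphere-isotopic, I would concatenate a decreasing sequence $P \to \cdots \to P^{2sr}$, the sphere isotopy, and the reverse (increasing) sequence ${P'}^{2sr} \to \cdots \to P'$ to realize $P \sim_{\srII} P'$. The weak case is proved verbatim with $\wrII$ in place of $\srII$.
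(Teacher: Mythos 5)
Your proposal is correct in outline and is, at bottom, the same kind of argument the paper relies on: the paper's entire proof is a one-line citation to \cite[Proof of Theorem~2.2~(2)]{IT_12}, saying that the reduction argument for general $\rII$ goes through verbatim when restricted to $\srII$ (resp.\ $\wrII$), and that cited proof is precisely a termination-plus-confluence (diamond-lemma) argument for the $2$-gon--reducing rewriting system. Your framing via Newman's lemma and the Church--Rosser property is clean, and you correctly note that the ``if'' direction of the equivalence (where $P\sim_{\srII}P'$ may involve increasing moves) follows from confluence of the terminating system rather than from uniqueness of normal forms alone. The one substantive caveat is that all of the mathematical content sits in the local-confluence case analysis that you defer: you must actually enumerate the configurations in which two coherent (resp.\ incoherent) $2$-gon faces share one double point, both double points, or an edge --- e.g.\ the two-crossing curve in which all four complementary faces are $2$-gons, and chains of $2$-gons where removing one bigon destroys a vertex of the other --- and verify in each that the two one-step reducts are joinable or already sphere isotopic. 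You should also check the small but necessary fact that a decreasing $\srII$ (resp.\ $\wrII$) is applicable to every coherent (resp.\ incoherent) $2$-gon face, so that a terminal object really has none. With that case check carried out, your argument is a self-contained substitute for the citation; without it, you have reproduced the skeleton but not the part of the proof that the paper outsources to \cite{IT_12}.
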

\begin{proof}
We can prove the claim by considering the general $\rII$ restricted to only $\srII$ (resp.~$\wrII$) in \cite[Proof of Theorem~2.2 (2)]{IT_12}.  
\end{proof}
\subsection{Known invariants and facts}
Let $P$ be a knot projection.  For every double point of $P$, we provide over/under information, as shown in Fig.~\ref{32f0_1}; as a result, we obtain a knot diagram $K_P$, with over/under information for every double point (cf.~\cite[Sec.~6.6]{polyak}).  
\begin{figure}[htbp]
\begin{center}
\includegraphics[width=5cm]{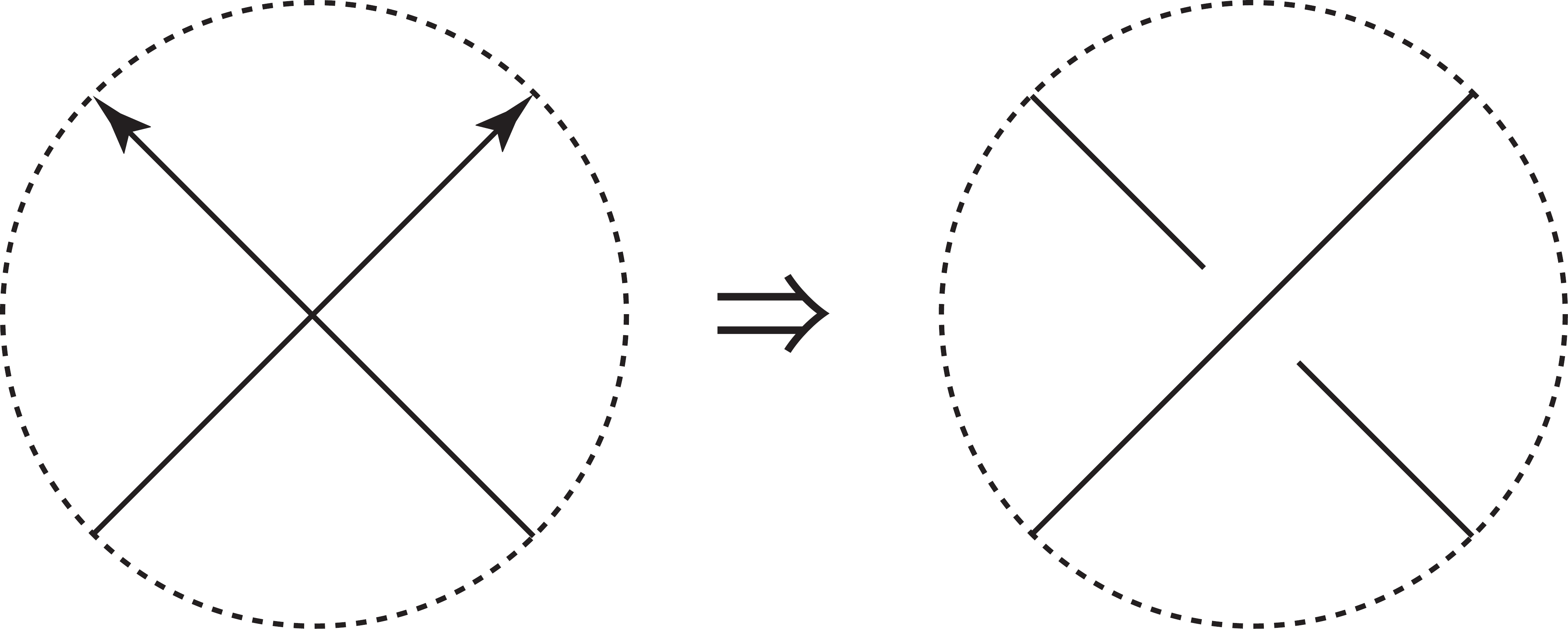}
\caption{A double point to a crossing of a knot diagram}
\label{32f0_1}
\end{center}
\end{figure}
%The map from the set of all the knot projections to the set of knot diagrams such that $P \mapsto K_P$ is denoted by $f$ in this paper.  
\begin{fact}\label{fact1}
If $P$ and $P'$ are related by a finite sequence consisting of $\rI$ and $\wrIII$, then a knot with knot diagram $K_P$ is isotopic to the knot with knot diagram $K_{P'}$.  As a corollary, a knot projection $P$ and trivial knot projection $O$ are related by a finite sequence consisting of $\rI$ and $\wrIII$ if and only if $P$ and $O$ are related by a finite sequence consisting of $\rI$ \cite[Page ~13, Corollary~4.1]{IT_12}.
%if $I$ is a knot invariant, then $(I \circ f) (P)$ is invariant under $\rI$ and $\wrIII$.   
\end{fact}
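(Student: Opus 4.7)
The plan is to prove Fact~\ref{fact1} by lifting each allowed projection-level move to the level of the knot diagram $K_P$ and invoking invariance of the knot type under Reidemeister moves on knot diagrams. The over/under assignment of Fig.~\ref{32f0_1} is purely local at each double point, so any Reidemeister move on $P$, performed inside a small disk $D$, induces a change in $K_P$ supported in the same disk $D$, with all crossings outside $D$ unchanged. It therefore suffices to check case by case what happens inside $D$, and then induct on the length of the $\{\rI,\wrIII\}$-sequence relating $P$ and $P'$.

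For a single $\rI$ move, the change inside $D$ is the creation or deletion of one double point forming a monogon; under the local assignment rule this is literally a Reidemeister~I move on the knot diagram, hence $K_P$ and $K_{P'}$ represent isotopic knots. The main obstacle is the $\wrIII$ case. Here three double points bound a triangle and I would enumerate the orientation patterns allowed by the weakness hypothesis, up to the dihedral symmetry of the triangle, and for each pattern apply the rule of Fig.~\ref{32f0_1} to the three double points before and after the move. The content of the claim is that weakness precisely guarantees that the three over/under assignments on the two sides of the move fit together to give a legal $\rIII$ move on the diagram level; for the strong orientation patterns the assignments would not be compatible with any single $\rIII$, which is exactly why the statement excludes $\srIII$. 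Since $\rIII$ preserves knot type, the isotopy class of the knot represented by $K_P$ is unchanged, completing the first assertion.

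For the corollary, the nontrivial direction asserts that $P \sim_{\{\rI,\wrIII\}} O$ implies $P \sim_{\{\rI\}} O$. From the first part the canonical diagram $K_P$ represents the unknot. The plan is to combine this with the classification of projections $\rI$-equivalent to $O$ already available in \cite{IT_12}, where such projections are characterized combinatorially (essentially by recursively stripping off monogons, or equivalently by $K_P$ being the unknot under this assignment). Under that criterion the conclusion $P \sim_{\{\rI\}} O$ is immediate. Thus the genuinely new combinatorial work is concentrated in the $\wrIII$-lifting case analysis, and this is where I would focus most of the effort; the rest is diagrammatic bookkeeping and an application of a prior classification theorem.
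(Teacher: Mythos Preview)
The paper does not prove Fact~\ref{fact1}; it is recorded as a known result with a citation to \cite[Corollary~4.1]{IT_12}. Your outline is a correct reconstruction of that argument: the local rule of Fig.~\ref{32f0_1} sends a projection-level $\rI$ to a diagrammatic Reidemeister~I, and for $\wrIII$ the weak orientation pattern is precisely what makes the three induced crossings fit a legal diagrammatic Reidemeister~III (whereas the strong pattern does not), so the knot type of $K_P$ is preserved along any $\{\rI,\wrIII\}$-sequence.

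One remark on the corollary. You rightly defer to \cite{IT_12}, but the implication ``$K_P$ is the unknot $\Rightarrow$ $P\sim_{\{\rI\}}O$'' is where the real depth sits, not in the $\wrIII$ case check. After passing to the $\rI$-reduced form $P^{1r}$ (no $1$-gons), one needs that $K_{P^{1r}}$ is a nontrivial knot whenever $P^{1r}\neq O$; this uses genuine knot theory (the diagram $K_{P^{1r}}$ has no nugatory crossings and, under the rule of Fig.~\ref{32f0_1}, lies in a class of diagrams for which nontriviality follows from having at least one crossing). Your plan is sound, but the ``prior classification theorem'' you invoke is doing more than the word ``immediate'' suggests.
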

\begin{fact}\label{fact2}
Let $P$ be a knot projection and $O$ be the trivial knot projection.  If $P$ and $O$ are related by a finite sequence consisting of $\rI$ and $\srIII$, then $P$ is a connected sum of knot projections, each of which is $O$, the curve with the shape of $\infty$, or the trefoil projection $3_1$, as shown in Fig.~\ref{32f02} \cite[Page 621, Theorem~4]{ITT}.  
\begin{figure}[htbp]
\begin{center}
\includegraphics[width=6cm]{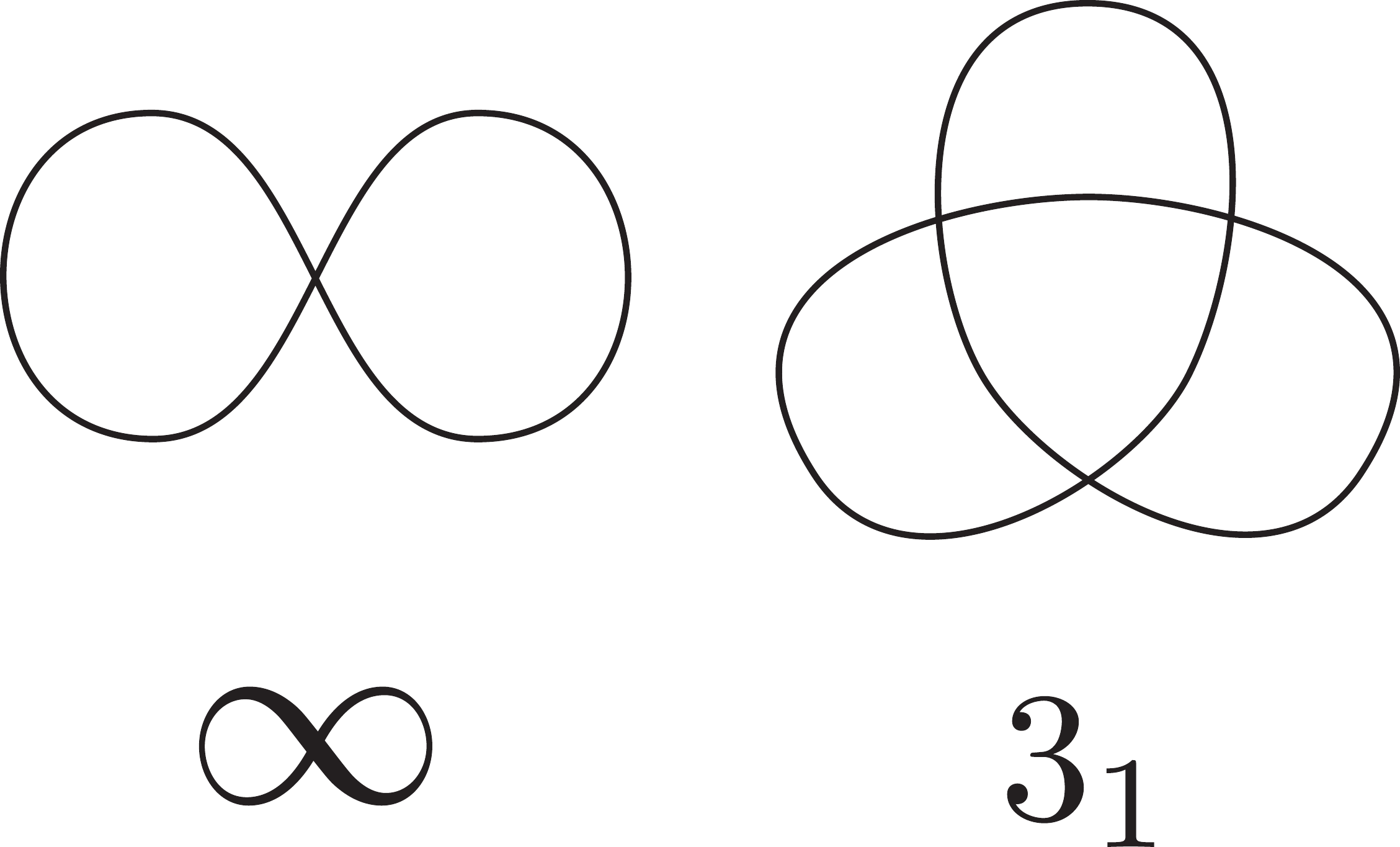}
\caption{The curve with the shape of $\infty$ (left) and trefoil projection $3_1$ (right)}
\label{32f02}
\end{center}
\end{figure}
\end{fact}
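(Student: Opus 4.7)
The plan is to argue structurally, reducing to a classification of prime (i.e.\ connected-sum-indecomposable) components by exploiting the locality of both moves.

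First I would note that both $\rI$ and $\srIII$ are supported in an embedded disk of $S^2$, so they respect connected-sum decompositions: if $P = P_1 \# \cdots \# P_k$ is a decomposition at separating double points, every single move modifies exactly one summand. Splitting a given $\{\rI,\srIII\}$-sequence $P \to O$ by which summand each move acts on, and using that the trivial projection is prime, we see that each $P_i$ must itself be $\{\rI,\srIII\}$-equivalent to $O$. The statement thus reduces to showing that a \emph{prime} knot projection equivalent to $O$ under $\{\rI,\srIII\}$ is one of $O$, $\infty$, or $3_1$.

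For the prime case, I would put $P$ in a canonical $\rI$-reduced form $P^{1r}$ by an $\rI$-analog of Theorem~\ref{thm_s2w2}: apply $\rI$ only in the monogon-collapsing direction until no $1$-gon remains. Since $O^{1r}=O$, $\infty^{1r}=O$, and $(3_1)^{1r}=3_1$, and since $\rI$-reduction plainly respects primeness, it suffices to show $P^{1r}\in\{O, 3_1\}$. I would induct on $c(P^{1r})$: the case $c=0$ gives $P^{1r}=O$; when $c=3$, primeness together with $\rI$-reducedness forces the three double points to sit in a single coherent triangular face, i.e.\ the trefoil pattern $3_1$.

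The main obstacle is ruling out prime, $\rI$-reduced $P^{1r}$ with $c(P^{1r})\geq 4$. From such a $P^{1r}$, any reduction sequence toward $O$ must at some point apply an $\srIII$, since no monogon is present for an $\rI$ to act on. I would argue, in a spirit analogous to the face analysis behind $\coh^{\odd}$ in Theorem~\ref{thm2}, that in a prime, $\rI$-reduced projection with $\geq 4$ crossings, any coherent triangular face available for $\srIII$ either is bounded by arcs that separate the projection at two double points --- contradicting primeness --- or, after the move, merely produces an $\rI$-removable monogon without decreasing the eventual $\rI$-reduced crossing count. Either way the reduction stalls at a prime, $\rI$-reduced projection with at least four crossings, contradicting $P \sim_{\{\rI,\srIII\}} O$ and completing the induction. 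The combinatorics of that face analysis is the genuinely technical step; everything else is formal from the locality of the moves and the two canonical-form lemmas.
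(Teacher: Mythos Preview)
The paper does not prove this statement at all: Fact~\ref{fact2} is quoted verbatim from \cite[Theorem~4]{ITT} and used as a black box, so there is no in-paper argument to compare your proposal against. Your write-up is therefore being measured against a result whose actual proof lives elsewhere; I can only comment on whether your sketch stands on its own.

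It does not, for two reasons. First, the step you yourself flag as ``the genuinely technical step'' is the entire content of the theorem. Asserting a dichotomy --- that in a prime, $\rI$-reduced projection with $c\geq 4$ every coherent triangle either witnesses non-primeness or leads, after $\srIII$, to no drop in the eventual $\rI$-reduced crossing count --- is precisely what has to be proved, and nothing in the analogy with the $\coh^{\odd}$ face analysis of Theorem~\ref{thm2} supplies it. Second, and more structurally, your induction is set up as a monotone simplification: from $P^{1r}$ you look for a move that decreases (or at least does not increase) $c((\,\cdot\,)^{1r})$. But an $\{\rI,\srIII\}$-path from $P$ to $O$ is under no obligation to be monotone; it may first \emph{increase} the crossing number via $\rI$ to create a coherent triangle where none existed, then apply $\srIII$. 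So even if your dichotomy held, it would only show that no \emph{simplifying} sequence exists from a prime $\rI$-reduced $P$ with $c\geq 4$, not that no sequence exists. Closing that gap is exactly the kind of global argument the cited paper supplies and your outline does not.

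A smaller point: your reduction to prime summands assumes that each move in the sequence acts on a single summand of a fixed connected-sum decomposition. But $\rI$ creates and destroys $\infty$-summands, and an $\srIII$ can turn a nugatory crossing into a non-nugatory one (or vice versa), so the prime decomposition is not static along the sequence. This is repairable, but it needs to be said rather than assumed.
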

For every double point, we locally replace the two branches of a double point with two simple arcs, as shown in Fig.~\ref{32f03}.  
\begin{figure}[htbp]
\begin{center}
\includegraphics[width=5cm]{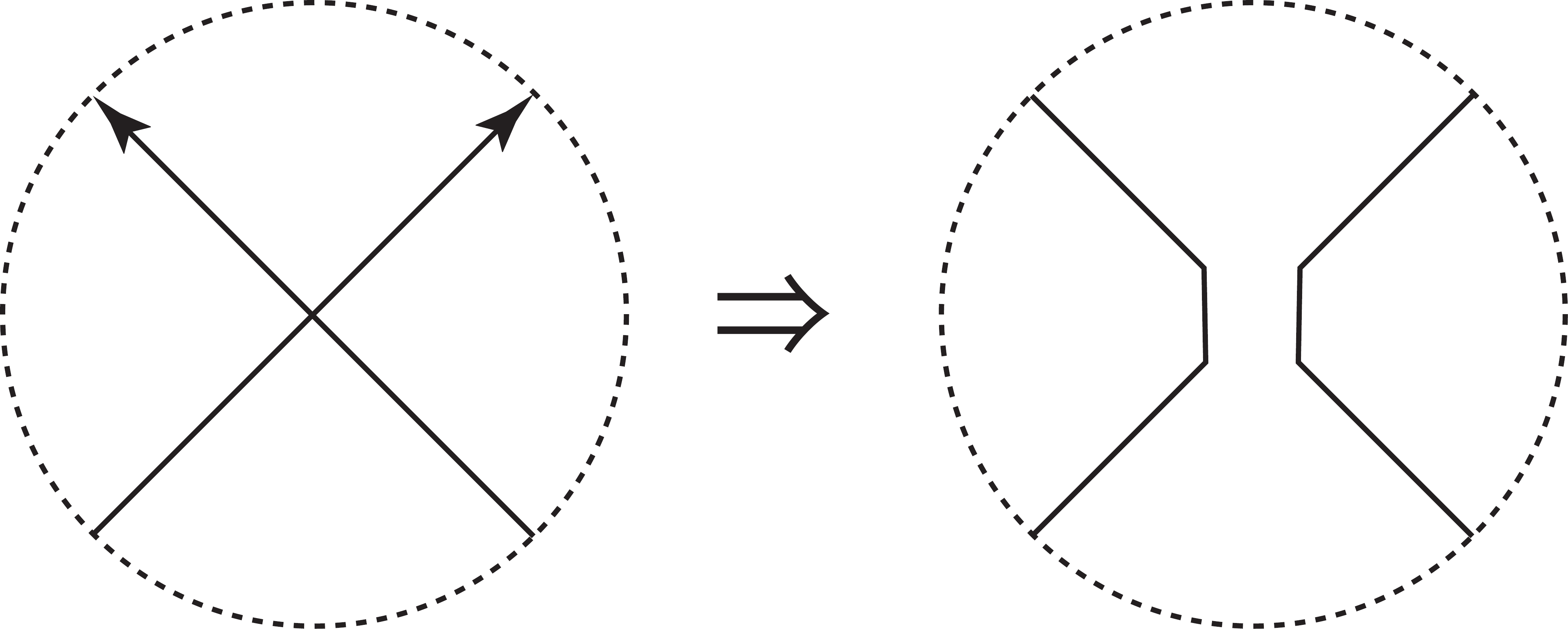}
\caption{Seifert resolution}
\label{32f03}
\end{center}
\end{figure}
After applying the replacements, called {\it{Seifert resolution}}, to all the double points of a knot projection $P$, we obtain an arrangement of circles, called the {\it{Seifert circle arrangement}}, denoted by $S(P)$.  The number of circles in $S(P)$ is called {\it{Seifert circle number}} and is denoted by $s(P)$.     
\begin{fact}[a well-known fact]\label{fact3} Let $P$ be a knot projection.  Then, the Seifert circle arrangement $S(P)$ and the Seifert circle number $s(P)$ are invariant under $\wrII$ and $\wrIII$.  
\end{fact}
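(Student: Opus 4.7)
The plan is a purely local check, organized in three steps. First I would verify that $S(P)$ is well-defined for an unoriented projection. The Seifert smoothing at a double point (Fig.~\ref{32f03}) depends only on the orientations of the two branches meeting there, and reversing the orientation of the immersed circle reverses every branch at every double point simultaneously, leaving each smoothing unchanged. Consequently $S(P)$ is a well-defined unoriented arrangement of disjoint simple closed curves on $S^2$ determined by $P$ alone, and so is its component count $s(P)$.

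For $\wrII$, inside the disk supporting the move the two double points of the bigon are oriented, for any choice of orientation on $P$, in the pattern that distinguishes a weak bigon from a strong bigon in Fig.~\ref{32f002}. Performing Seifert smoothing at both of these double points yields two arcs that simply run through the bigon region, which is the same local configuration one obtains from $P'$ by erasing the bigon via the move. Because $P$ and $P'$ agree outside the disk supporting the move and Seifert smoothing is a local operation, $S(P)$ and $S(P')$ coincide as arrangements on $S^2$, and in particular $s(P)=s(P')$.

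For $\wrIII$, the same strategy applies to the disk supporting the triangular region of the move. The orientation pattern on the three incident arcs characterizing $\wrIII$ (as opposed to $\srIII$) is precisely the one for which the three Seifert smoothings produce the same local configuration of arcs on both sides of the move; drawing the two smoothed triangles side by side and comparing them as planar arrangements is a finite inspection. Locality then upgrades the local match to the global equality $S(P)=S(P')$, giving $s(P)=s(P')$.

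The main obstacle is bookkeeping rather than anything conceptual: one has to compare the two smoothed pictures on each side of a weak move as planar arrangements on $S^2$, not merely as multisets of arcs, and be careful that the ``weak versus strong'' distinction in Fig.~\ref{32f002} is read off exactly so that the Seifert smoothings do match up. Since the weak versions of RII and RIII are essentially defined by this matching property, while the strong versions are the ones that would genuinely reroute the smoothed arcs, no additional ingredient is needed---which is presumably why the statement is recorded as a well-known fact.
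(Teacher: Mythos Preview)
The paper records Fact~\ref{fact3} as ``a well-known fact'' and gives no proof of its own, so there is nothing to compare against directly. Your proposal supplies exactly the standard local verification one would expect: checking orientation-independence of $S(P)$, then comparing the Seifert-smoothed local pictures on both sides of a $\wrII$ and a $\wrIII$. This is correct and is the canonical argument; the only thing to add is that your parenthetical remark at the end---that the weak moves are essentially singled out by this Seifert-smoothing compatibility, while the strong moves genuinely alter $S(P)$---is precisely the content of the verification, so in a written proof you would replace that remark with the actual side-by-side pictures (or an explicit description of how the smoothed arcs connect) rather than leave it as an assertion.
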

\begin{fact}\label{fact4}
%(See (1): (2): \cite[Page 5, Theorem~2.2 (1) and (2)]{IT_12})
%Let $P$ be a knot projection.  

\noindent(1) Let $P^r$ be a knot projection obtained from a knot projection $P$ with no $1$-gons and $2$-gons only decreasing double points by a finite sequence consisting of $\rI$ and $\rII$.  Two knot projections, $P^r$ and ${P'}^r$, are sphere isotopic if and only if $P$ and $P'$ are related by a finite sequence consisting of $\rI$ and $\rII$ \cite[Page 2302, Lemma~2.1]{khovanov} (See also \cite[Page~5, Theorem~2.2 (3)]{IT_12} and \cite{IT_Add}).  

\noindent(2) Let $P^{1r}$ (resp.~$P^{2r}$) be a knot projection obtained from a knot projection $P$ with no $1$-gons (resp.~$2$-gons) only decreasing double points by a finite sequence consisting of $\rI$ (resp.~$\rII$).  Two knot projections, $P^{1r}$ and ${P'}^{1r}$ (resp.~$P^{2r}$ and ${P'}^{2r}$), are sphere isotopic if and only if $P$ and $P'$ are related by a finite sequence consisting of $\rI$ (resp.~$\rII$) \cite[Page 5, Theorem~2.2 (1) and (2)]{IT_12}.  
\end{fact}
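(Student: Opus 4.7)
The plan is to prove both parts by a termination plus local-confluence (Newman's lemma) argument, showing that the indicated reduction $P\mapsto P^{r}$ (respectively $P\mapsto P^{1r}$, $P\mapsto P^{2r}$) always yields a normal form that is unique up to sphere isotopy. The easy direction of each biconditional is immediate: if the reductions are sphere-isotopic, then splicing the reducing sequence $P\to P^{r}$, the given isotopy $P^{r}\cong (P')^{r}$, and the reverse of $P'\to (P')^{r}$ produces a finite sequence of moves from the allowed set relating $P$ and $P'$; note that the reverse of a reducing $\rI$ or $\rII$ move is itself an $\rI$ or $\rII$ move (just an increasing one). The same splicing argument handles parts (1) and (2) uniformly.

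For the hard direction of part (1) the plan is to induct on the length of a sequence of $\rI$ and $\rII$ moves from $P$ to $P'$. The base case is trivial, and the inductive step reduces to the single-move statement: if $P$ and $Q$ differ by one $\rI$ or $\rII$ move, then $P^{r}$ and $Q^{r}$ are sphere-isotopic. This in turn reduces to local confluence of the reducing system: whenever two reducing moves are applicable to a projection $R$, both resulting projections admit further reducing sequences terminating at a common projection. Since each reducing move strictly decreases the number of double points, the reduction process terminates, so by Newman's lemma local confluence upgrades to global confluence, and the normal form is unique up to sphere isotopy.

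The main obstacle is the case analysis establishing local confluence. One must examine all the ways in which two reducible regions can interact: two $1$-gons, two $2$-gons, or a $1$-gon and a $2$-gon that either are disjoint, share one or more double points, or are nested. The disjoint case is immediate because the two moves commute. The overlapping configurations form a short finite list of combinatorial patterns, and for each the plan is to exhibit an explicit completion by further reducing moves to a common projection. This is precisely the content of Khovanov's Lemma~2.1 in \cite{khovanov} (see also \cite[Theorem~2.2]{IT_12}), which I would invoke rather than reprove in full.

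Part (2) follows from the same scheme with a markedly smaller case analysis. For $P^{1r}$ there is almost nothing to check: a $1$-gon is determined by its unique double point, so two applicable $\rI$-reducing moves either coincide or act at disjoint double points and hence commute. For $P^{2r}$ the confluence analysis is formally parallel to the one carried out for Theorem~\ref{thm_s2w2}, but with $\srII$ and $\wrII$ treated uniformly as the unrestricted move $\rII$; thus only the $2$-gon/$2$-gon overlap cases are relevant, and each is resolved by a direct local argument. Termination together with local confluence then gives a unique normal form, from which the biconditional follows exactly as in part (1).
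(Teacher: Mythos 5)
The paper offers no proof of this statement at all: it is recorded as a Fact with citations to Khovanov's Lemma~2.1 and to Theorem~2.2 of \cite{IT_12} (plus its Addendum), so the substantive content lives entirely in those references. Your termination-plus-Newman's-lemma scaffolding is the correct and standard way to organize such a proof, and your reduction of the hard direction to local confluence of overlapping $1$-gon/$2$-gon configurations is exactly where the real work sits --- but since you then invoke Khovanov's Lemma~2.1 for that case analysis rather than carrying it out, your argument ultimately rests on the same external sources as the paper's, and is acceptable on the same terms.
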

\begin{fact}\label{fact5}
Let $P^{sr}$ (resp.~$P^{wr}$) be a knot projection obtained from a knot projection $P$ with no $1$-gons and coherent (resp.~incoherent) $2$-gons only decreasing double points by a finite sequence consisting of $\rI$ and $\srII$ (resp.~$\wrII$).  Two knot projections, $P^{sr}$ and ${P'}^{sr}$, are sphere isotopic if and only if $P$ and $P'$ are related by a finite sequence consisting of $\rI$ and $\srII$ (resp.~$\wrII$) \cite[Page 2, Theorem~1]{IT_circle}.    
\end{fact}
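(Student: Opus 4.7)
My plan is to prove Fact~\ref{fact5} by a standard reduction argument, treating the $\srII$ case (the $\wrII$ case being symmetric). The easy direction is that if $P^{sr}$ and ${P'}^{sr}$ are sphere isotopic, then concatenating the reduction $P \to P^{sr}$, the isotopy, and the reverse of ${P'}^{sr} \to P'$ produces a finite sequence of $\rI$ and $\srII$ moves relating $P$ to $P'$. The content therefore lies in the converse: the reduced form obtained by only-decreasing $\rI$ and $\srII$ moves is unique up to sphere isotopy.

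First, I would verify termination: each such move decreases $c(P)$ by $1$ or $2$, so reduction halts in a projection with no $1$-gons and no coherent $2$-gons. Next, I would run a Newman-style local-confluence argument. Given two reducible regions $R_1, R_2$ of $P$ (each a $1$-gon or a coherent $2$-gon), I need to find moves after each that reach a common reduct. If $R_1$ and $R_2$ share no double point, the two moves commute on the nose. The remaining cases — a $1$-gon nested inside a $1$-gon, a $1$-gon incident to a coherent $2$-gon, and two coherent $2$-gons meeting along a shared edge or double point — would be enumerated by hand, and in each case the local picture is small enough to construct a common further reduction explicitly (often collapsing both regions in a few extra $\rI$ or $\srII$ steps, never needing a $\wrII$ step). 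Combining local confluence with termination, Newman's lemma delivers global confluence, so $P^{sr}$ is well defined up to sphere isotopy.

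With uniqueness of the normal form in hand, the ``only if'' direction follows by induction on the length of a $\{\rI,\srII\}$-sequence between $P$ and $P'$: it suffices to handle a single move $P \to P''$, increasing or decreasing, and then compare normal forms of both sides using confluence applied to the larger of the two projections. This mirrors the strategy used for the unrestricted $\rII$ case in Fact~\ref{fact4}, and indeed Fact~\ref{fact4}(1) together with Theorem~\ref{thm_s2w2} can be thought of as the skeleton I am refining to the coherent setting.

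I expect the main obstacle to be the local confluence case analysis, specifically the interactions between a reducing $\srII$ on a coherent $2$-gon and a simultaneously available $\rI$ or $\srII$ move on a neighbouring $1$-gon or $2$-gon. Because $\srII$ is restricted to the coherent subtype of $\rII$, one has to check that no overlap configuration forces the introduction of an auxiliary $\wrII$ step; this is precisely where the argument departs from the unrestricted $\rII$ version of Fact~\ref{fact4}, and it is where the coherence of edge orientations must be exploited to rule out the obstructive overlap patterns.
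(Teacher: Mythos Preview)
The paper does not prove Fact~\ref{fact5}; it is stated in the subsection ``Known invariants and facts'' and simply cited from \cite[Page~2, Theorem~1]{IT_circle}, so there is no in-paper argument to compare your proposal against.

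That said, your plan is a sound route to the cited result. Termination plus local confluence followed by Newman's lemma is exactly the shape of argument used for the neighbouring normal-form statements the paper imports (Fact~\ref{fact4} and Theorem~\ref{thm_s2w2}, the latter proved by restricting \cite[Proof of Theorem~2.2~(2)]{IT_12}), and the present fact is the $\rI$-plus-restricted-$\rII$ analogue. You have also correctly located the only place where the argument is not just a copy of the unrestricted $\rII$ case: in the overlap analysis you must check that a coherent $2$-gon interacting with an adjacent $1$-gon or coherent $2$-gon never forces an incoherent bigon to be resolved, and this is where coherence of the edge orientations is used. None of the overlap configurations is large, so the enumeration you propose goes through; just be careful to include the case where a $1$-gon sits inside a coherent $2$-gon (sharing one vertex), since collapsing the $2$-gon first destroys the $1$-gon's vertex and one needs to verify the two reductions still meet.
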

\begin{fact}\label{fact_circle}
Let us consider the local replacement of a double point as shown in Fig.~\ref{32f17}.
\begin{figure}[htbp]
\begin{center}
\includegraphics[width=5cm]{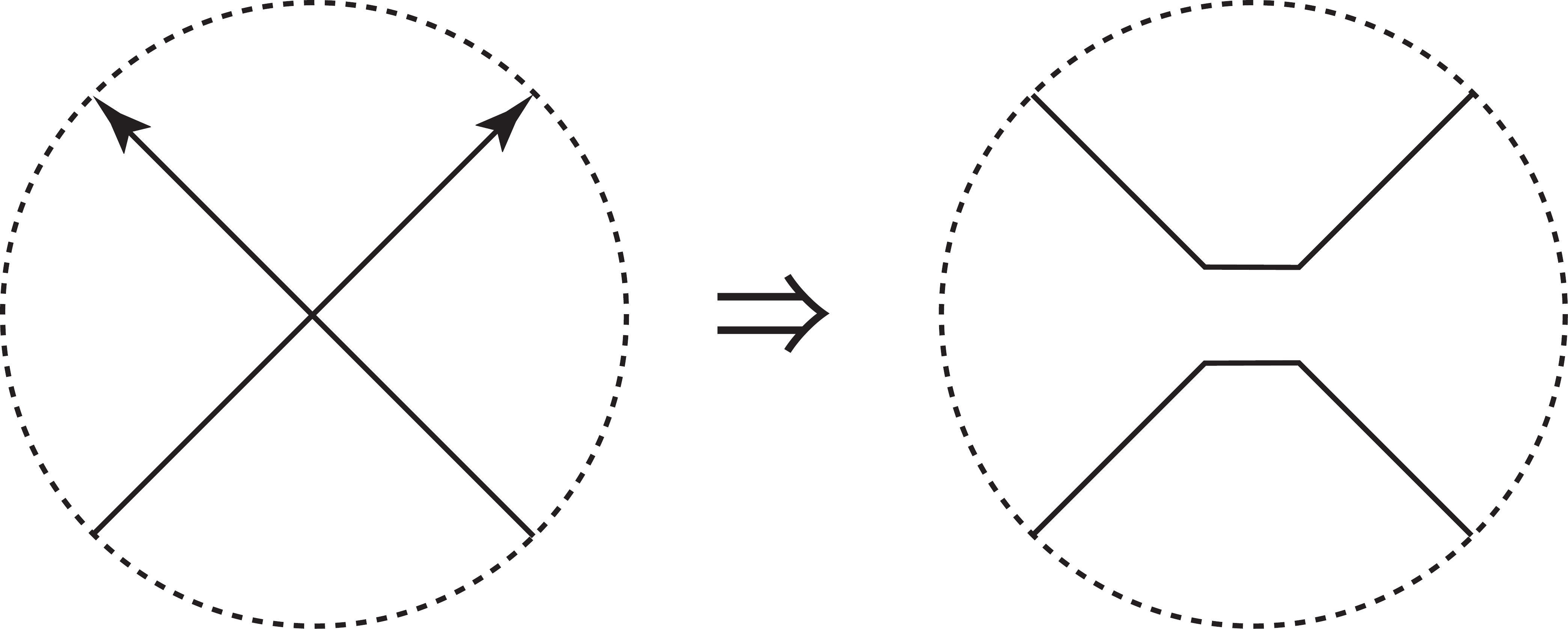}
\caption{Non-Seifert resolution}
\label{32f17}
\end{center}
\end{figure}
After applying the replacements to all the double points of a knot projection $P$, we have an arrangement of circles on $S^{2}$, called {\it{circle arrangement}} $\tau(P)$.  The number of circles in $\tau(P)$ is called the {\it{circle number}} $|\tau(P)|$.  $\tau(P)$ and $|\tau(P)|$ are invariant under $\rI$ and $\srII$ \cite[Page~5, Theorem~3 and Corollary~3]{IT_circle}.  
\end{fact}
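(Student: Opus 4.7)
My plan is to verify both claims by a purely local check in the small disk $D\subset S^{2}$ in which each move takes place. Outside $D$ the knot projection is unchanged, so the non-Seifert resolutions of $P$ and $P'$ agree on $S^{2}\setminus D$; invariance therefore reduces to checking that the two local non-Seifert pictures inside $D$ (before and after the move) produce the same pattern of arcs in $D$ with matched endpoints on $\partial D$, up to isotopy rel $\partial D$. Once this local matching is proved, gluing back the common exterior yields $\tau(P)=\tau(P')$, and taking connected components gives $|\tau(P)|=|\tau(P')|$.

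For $\rI$ the disk $D$ contains either a single unknotted arc (after removing the kink) or a kink with one crossing (before removing it). I would apply the non-Seifert smoothing of Figure~\ref{32f17} to the one crossing and observe by direct inspection that the resulting configuration inside $D$ is an arc joining the same two boundary points of $D$ as the original straight arc, with no extra closed component. The opposite (Seifert) smoothing at the kink would instead split the monogon off as a small circle, which is precisely why $s(P)$ is \emph{not} invariant under $\rI$; switching to the non-Seifert resolution removes this obstruction. Invariance of both $\tau$ and $|\tau|$ under $\rI$ follows.

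For $\srII$ the disk $D$ contains either two parallel arcs (post-move) or a coherent bigon with two crossings (pre-move). The crucial input is that the two strands of the bigon are coherently oriented, which pins down the local pattern at both crossings. Applying the non-Seifert smoothing at the two crossings and tracing the four resulting sub-arcs across the two sides of the bigon, one checks by a short case analysis on the two possible coherent orientations that the sub-arcs reassemble into precisely the two-parallel-arc picture joining the same four boundary points of $\partial D$, again with no extra closed component. The \emph{strong} hypothesis is essential here: for an incoherent bigon the reversed orientation of one strand reroutes the sub-arcs so as to create a small extra circle inside $D$, which is why no analogous invariance is claimed under $\wrII$; this is the natural complement of Fact~\ref{fact3}, where Seifert smoothings were compatible with $\wrII$ but not $\srII$.

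The main obstacle I expect is the $\srII$ step. The $\rI$ case reduces to a single pictorial check, whereas $\srII$ requires smoothing two crossings simultaneously and keeping track of how the four resulting sub-arcs are joined through the two sides of the bigon. The case analysis is finite, but one must carry it out carefully enough to separate the coherent case from the incoherent one, and this orientation bookkeeping is precisely what pins down the statement to $\srII$ rather than general $\rII$.
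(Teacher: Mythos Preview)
The paper does not give its own proof of this statement; it is listed under ``Known invariants and facts'' and simply cited from \cite{IT_circle}. So there is no in-paper argument to compare against.

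Your local-disk approach is the standard and correct one, and is presumably what the cited reference does: since the move is supported in a small disk $D$, it suffices to compare the non-Seifert smoothings inside $D$ and check that they yield isotopic arc patterns rel $\partial D$ with no extra closed components. Your observation that the coherent/incoherent distinction governs whether a spurious circle appears in the bigon case is exactly the mechanism, and your remark on the complementarity with Fact~\ref{fact3} (Seifert smoothings being compatible with $\wrII$ rather than $\srII$) is the right context. One small caution on terminology: in this paper a \emph{coherent} $2$-gon is one whose two edges induce a consistent orientation of the bigon boundary, which means the two strands traverse the bigon in \emph{opposite} linear directions; make sure your ``two parallel arcs'' bookkeeping is aligned with that convention when you carry out the case check.
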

\begin{fact}\label{fact_w123}
Let $P$ be a knot projection and $D_P$ a knot diagram obtained by arbitrarily information of any over/under-crossing data.    
Let $tr(P)$ be the trivializing number \cite[Page 440, Theorem~13]{hanaki} and $g(P)$ the canonical genus of $D_P$.  The integer $W(P)=tr(P)-2g(P)$ is invariant under $\rI$, $\wrII$, and $\wrIII$ \cite[Page 4, Theorem~2]{IT_w123}.    
\end{fact}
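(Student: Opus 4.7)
My plan is to prove invariance of $W(P) = tr(P) - 2g(P)$ by separately tracking $g(P)$ and $tr(P)$ under each of the three moves $\rI$, $\wrII$, $\wrIII$, and then showing that the combined changes cancel.

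For the canonical genus, I would apply the formula $2g(P) = c(P) - s(P) + 1$, where $c(P)$ is the number of double points and $s(P)$ is the Seifert circle number. This formula depends only on the projection, since the Seifert smoothing at a crossing is determined by orientation alone and is independent of the over/under data chosen in $D_P$. By Fact~\ref{fact3}, $s$ is invariant under $\wrII$ and $\wrIII$. Under $\wrIII$ the crossing number $c$ is also unchanged (as for any $\rIII$), so $2g$ is invariant. Under $\wrII$, $c$ changes by $\pm 2$ while $s$ is preserved, so $2g$ changes by $\pm 2$. Under $\rI$, both $c$ and $s$ change by $\pm 1$ simultaneously (the kink contributes one new crossing and bounds one new Seifert circle), so $2g$ is again invariant.

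For $tr(P)$ I would work with the Gauss (chord) diagram of $P$ and use Hanaki's combinatorial characterization of the trivializing number. Under $\rI$ an isolated chord is introduced or removed; such a chord intersects no others, so it contributes nothing to Hanaki's count and $tr$ is unchanged. Under $\wrIII$ the chord diagram undergoes a local rearrangement of three chord endpoints that leaves the pairwise intersection pattern governing Hanaki's formula intact, so $tr$ is unchanged. Under $\wrII$ a pair of chords is inserted or deleted in a prescribed configuration; a direct case analysis of Hanaki's formula shows that $tr$ changes by exactly $\pm 2$, with a sign matching that of the change in $2g$.

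Combining, $W(P) = tr(P) - 2g(P)$ is invariant under $\rI$ and $\wrIII$ (both terms individually invariant) and under $\wrII$ (both terms change by $\pm 2$ in the same direction). The main obstacle is the $\wrII$ case for $tr(P)$: this is the only move on which either summand changes, and the argument must pin down both the magnitude and the sign of the change in $tr$ so that it exactly cancels the $\pm 2$ shift in $2g$. Carrying this out requires a careful enumeration of how the two newly inserted chords can interlace with the pre-existing chords of the Gauss diagram, whereas the $\rI$ and $\wrIII$ cases follow essentially from locality together with Fact~\ref{fact3}.
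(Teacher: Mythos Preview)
The paper does not prove this statement; Fact~\ref{fact_w123} is simply quoted from \cite[Page~4, Theorem~2]{IT_w123} with no argument given here. There is therefore no proof in the present paper against which to compare your attempt.

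Your overall plan---tracking $2g(P)=c(P)-s(P)+1$ and $tr(P)$ separately under each move and showing the changes cancel---is the natural one and is essentially what the cited reference does. Your treatment of $2g$ via Fact~\ref{fact3} is correct.

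The sketch for $tr(P)$, however, has a real gap in the $\wrIII$ case. A Reidemeister~III move involves \emph{three} chords (six endpoints, grouped into three adjacent pairs that are swapped), not ``three chord endpoints,'' and the move \emph{does} alter the pairwise intersection pattern among those three chords: the number of mutually crossing pairs among them changes under the move. What is preserved is only the intersection of each of the three chords with every \emph{other} chord of the diagram. So the assertion that the pattern is left ``intact'' is false as written, and one still owes an argument that this internal rearrangement cannot change the minimum size of a deletion set in Hanaki's characterisation. For the $\wrII$ case you correctly flag the crux; note for your case analysis that the two new chords inserted by a $\wrII$ \emph{do} cross each other in the Gauss diagram (the Gauss word acquires two consecutive blocks $pq\ldots pq$), and every other chord crosses $p$ if and only if it crosses $q$---this is the structural fact on which the $\pm 2$ computation in \cite{IT_w123} rests.
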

\begin{fact}\label{fact6}
Let $P$ be a knot projection.  The Arnold invariant ${J^{+}_S}(P)$ is invariant under $\srII$ and $\rIII$ (Polyak mentions that the result was obtained by Arnold, see \cite[Sec.~2.4]{polyak}.  For Arnold's serial work, see \cite{arnold}). 
\end{fact}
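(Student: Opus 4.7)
The plan is to combine Arnold's axiomatic definition of $J^+$ with Polyak's Gauss-diagram reformulation, then reduce the claim to a finite local check. Arnold characterized $J^+$ on the space of generic immersed oriented plane curves as the unique invariant that (i)~vanishes on the standard circle, (ii)~is additive under connected sum, (iii)~is invariant under triple-point moves and under inverse self-tangencies, and (iv)~jumps by $+2$ at a direct self-tangency. To obtain the spherical version $J^+_S$, I would use Polyak's signed Gauss-diagram formula: because that formula depends only on the cyclic combinatorics and signs of the arrows, it is manifestly $S^2$-intrinsic and requires no choice of basepoint or planar rotation number.

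Granted this combinatorial definition, the invariance is a direct case analysis. For $\rIII$: the triple-point move replaces one configuration of three arrows on the Gauss diagram by a neighboring configuration, and a finite check over the sign patterns (up to the symmetries of the triple-point picture) shows that the Polyak sum is preserved. For $\srII$: one first identifies which of the two RII-type moves in Figure~\ref{32f002} is the inverse self-tangency. From the proof of Theorem~\ref{thm2} we know that $\srII$ creates or destroys a coherent bigon and $\wrII$ an incoherent one. A direct orientation check---bringing two parallel arcs travelling in the same direction into tangency and past produces a bigon whose two edges are oriented as paths between the two crossings in the same direction, hence incoherent as a polygon---shows that a direct self-tangency produces an incoherent bigon; therefore $\srII$ corresponds to the inverse self-tangency, and $J^+_S$ is invariant under it.

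The main obstacle is precisely this bookkeeping at the interface of the paper's coherent/incoherent vocabulary and Arnold's direct/inverse vocabulary, since a sign confusion would wrongly assign the invariance to $J^-_S$ rather than $J^+_S$; I would therefore carry out the bigon orientation analysis as carefully as above before reducing to Arnold's theorem. The other potentially thorny step---building $J^+_S$ on $S^2$ when the planar $J^+$ is defined through a Whitney rotation number that is not intrinsic on the sphere---I would sidestep by working throughout with Polyak's Gauss-diagram formula instead of Arnold's original writhe-plus-rotation-number expression.
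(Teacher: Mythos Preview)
The paper does not supply a proof of Fact~\ref{fact6}; it is recorded under ``Known invariants and facts'' with citations to Polyak and Arnold and nothing more. Your sketch is therefore not comparable to anything the paper does, but it is a correct outline of the standard argument. The essential content is the dictionary step: Arnold's $J^{+}$ is by construction invariant under inverse self-tangencies and under triple-point moves, so the only nontrivial point is that $\srII$ is the inverse-tangency move in this paper's conventions. Your orientation analysis gets this right---two arcs with parallel velocity vectors create a bigon whose two edges both run from one new crossing to the other, hence an incoherent $2$-gon; thus the direct self-tangency is $\wrII$ and the inverse one is $\srII$, and $J^{+}_S$ is unchanged by the latter.

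One minor correction of internal reference: the place in the paper where the correspondence ``$\srII \leftrightarrow$ coherent $2$-gon, $\wrII \leftrightarrow$ incoherent $2$-gon'' is made explicit is the statement of Theorem~\ref{thm_s2w2}, not the proof of Theorem~\ref{thm2} (that proof treats only $\wrII$). Otherwise your plan---using Polyak's Gauss-diagram formula to make $J^{+}_S$ well defined on $S^{2}$ without a rotation number, then reducing invariance to a local check---is exactly how one would fill in the citation.
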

\section{Proof of Theorem~\ref{main1}}
\noindent (1) $\mathcal{S}=\emptyset$.
First, we consider the case where $\mathcal{S}$ $=$ $\emptyset$.   It can be easily seen that, for this case, there exists only an equivalence relation, i.e., sphere isotopy.  Here, two knot projections $P$ and $P'$ are called \emph{sphere isotopic} if there exists a smooth family of homeomorphisms $h_t :$$S^2 \to S^2$ for $t \in [0, 1]$ such that $h_0$ is the identity map of $S^2$ and $h_1 (P)=P'$.  Such a family of $h_t$ is called a \emph{sphere isotopy}.   
Thus, the relation $\sim_{\mathcal{S}}$ is different from any other equivalence relation.  

\noindent $\bullet$ ({\bf{Contracting sets.}}) Next, if $\mathcal{S}$ satisfies the condition that every knot projection is equal to the trivial knot projection $O$, $\mathcal{S}$ is called a {\it{contracting set}}.     
Eight contracting sets, each of which satisfies the condition that every knot projection $\sim_{\mathcal{S}}$ $O$, are listed below.  To avoid confusion, we list them by specifying $\mathcal{S}$.  

%\item $\emptyset$
\noindent(2) $\{\rI, \srII, \wrII, \srIII, \wrIII \}$

\noindent(3) $\{\rI, \srII, \wrII, \srIII \}$

\noindent(4) $\{\rI, \srII, \wrII, \wrIII \}$

\noindent(5) $\{\rI, \srII, \srIII, \wrIII \}$

\noindent(6) $\{\rI, \wrII, \srIII, \wrIII \}$

\noindent(7) $\{\rI, \srII, \srIII \}$ \label{7}

\noindent(8) $\{\rI, \srII, \wrIII \}$ \label{8}

\noindent(9)  $\{\rI, \wrII, \srIII \}$ \label{9}

Recall that every knot projection can be related to the trivial knot projection $O$ by a finite sequence consisting of $\rI$, $\rII$, and $\rIII$.  To show that all the eight sets are contracting sets, we notice that it is sufficient to show that for cases (7), (8), and (9), $\mathcal{S}$ generates $\{ \rI, \rII, \rIII \}$ ($=$ $\{ \rI, \srII, \wrII, \srIII, \wrIII \}$).  
% for the last three sets to show that all eight sets are contracting sets.  
However, the last three cases have been already obtained from \cite[Page 7, Proposition~1]{IT_w123} using Fig.~\ref{32f1}, the proof of which is simple; hence, we describe the proof here again.  
%Here, its proof is not difficult, we show it again.  
\begin{figure}[h!]
\includegraphics[width=11cm]{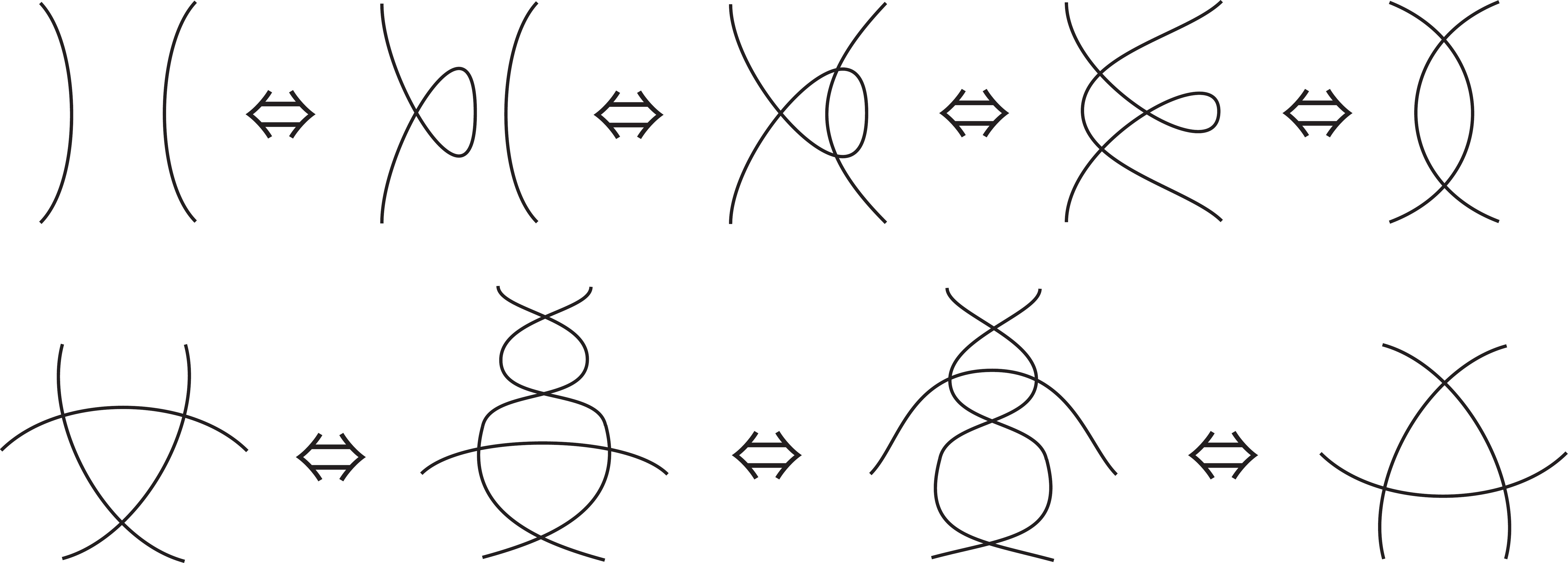}
\caption{Key sequences of figures.
%}\label{32f1}
%\end{figure}
Upper line represents a single $\srII$ (resp.~$\wrII$), which consists of two $\rI{\rm{s}}$, a $\wrII$ (resp.~$\srII$), and a $\srIII$ (resp.~$\wrIII$).  Lower line represents a single $\srIII$ (resp.~$\wrIII$) consists of two ${\srII}{\rm{s}}$ and a $\wrIII$ (resp.~$\srIII$).}\label{32f1}
\end{figure}

\noindent(7) $\{\rI, \srII, \srIII \}$ generates $\{\rI, \rII, \rIII \}$.  This is because $\wrIII$ is generated by $\srII$ and $\srIII$ and $\wrII$ is generated by $\rI$, $\srII$, and $\wrIII$.   

\noindent(8) $\{\rI, \srII, \wrIII \}$ generates $\{ \rI, \rII, \rIII \}$.  This is because $\srIII$ is generated by $\srII$ and $\wrIII$.  The reminder is clear from case (7).
%The discussion returns to case (7).  

\noindent(9) $\{ \rI, \wrII, \srIII \}$ generates $\{ \rI, \rII, \rIII \}$.  This is because $\srII$ is generated by $\rI$, $\wrII$ and $\srIII$.  The reminder is clear from case (7).
%The discussion returns to case (7).    

\noindent$\bullet$ ({\bf{23 non-trivial equivalence classes.}})  Only 19 of the 23 cases need to be considered because 4 cases are the same as 2 other cases (see {\bf{Point 1}} and {\bf{Point 2}}).  In the following points, if two sets, $\mathcal{S}_1$ and $\mathcal{S}_2$, of Reidemeister moves generate the same equivalence relation, we say that $\mathcal{S}_1$ is equivalent to $\mathcal{S}_2$ and write $\mathcal{S}_1$ $\sim$ $\mathcal{S}_2$.  

\noindent {\bf{Point 1}}: (10) $\{ \srII, \wrII, \wrIII \}$ 

$\sim$ (11) $\{ \srII, \wrII, \srIII, \wrIII \}$ 

$\sim$ (12) $\{ \srII, \wrII, \srIII \}$ by using Fig.~\ref{32f1}.  

\noindent {\bf{Point 2}}: (13) $\{\srII, \srIII \}$ 

$\sim$ (14) $\{ \srII, \srIII, \wrIII \}$ 

$\sim$ (15) $\{ \srII, \wrIII \}$ by using Fig.~\ref{32f1}.

%Thus we can list up the left 18 cases.  
Now, consider Conditions 1, 2, and 3 to list the remaining 19 cases.  

\noindent$\bullet$ {\bf{Condition 1.}} $\rI \in \mathcal{S}$.  

First, we treat 11 (resp.~8) cases with $\rI \notin \mathcal{S}$ (resp.~$\in \mathcal{S}$), called the {\it{non-RI}} (resp.~{\it{RI}}~) {\it{group}}.  The condition is critical and stated as follows: the trivial knot projection and the curve with the shape of $\infty$ can be related by the elements of $\mathcal{S}$ if and only if $\rI \in \mathcal{S}$.   

\noindent (11) $\{ \srII, \wrII, \srIII, \wrIII \}$

\noindent (14) $\{ \srII, \srIII, \wrIII \}$

\noindent (16) $\{ \wrII, \srIII \}$

\noindent (17) $\{ \wrII, \srIII, \wrIII \}$

\noindent (18) $\{ \srII \}$

\noindent (19) $\{ \wrII \}$

\noindent (20) $\{ \srII, \wrII \}$

\noindent (21) $\{ \wrII, \wrIII \}$

\noindent (22) $\{ \srIII, \wrIII \}$

\noindent (23) $\{ \srIII \}$

\noindent (24) $\{ \wrIII \}$

%\noindent $\bullet$ Condition 1: $\mathcal{S}$ contains $\rI$.  

%Nest, we consider the following condition.  

\noindent$\bullet$ {\bf{Condition 2.}} {\it{An equivalent class containing the trivial knot projection consists of a single element.}}  

A case satisfying Condition 2 is called a {\it{single triviality case}}.  
%This condition can be detect by invariant $C(P)$ (Theorem~\ref{thm3}).  
Cases satisfying (resp.~not satisfying) Condition 2 are (16), (17), (19), (21), (22), (23), and (24) (resp.~(11), (14), (18), and (20)).  This is because $C(P)=0$ $\Leftrightarrow$ $P$ belongs to $[O]$ under (16), (17), (19), (21), (22), (23), or (24) by Theorem~\ref{thm3}.  Thus, we obtain Table~\ref{t1}.  
%Similarly, we apply Conditions 1 and 2 to the other classes.  Now we classify two groups as Table~\ref{t1}.  
\begin{table}[h!]
\caption{Non-RI group}\label{t1}
\begin{tabular}{|c|c|} \hline
& non-RI group \\ \hline
non-single triviality & (11), (14), (18), (20)\\ \hline
single triviality & (16), (17), (19), (21), (22), (23), (24) \\ \hline
\end{tabular}
\end{table}

On the other hand, Cases (25)--(32), each of which contains $\rI$, are listed as follows.  These cases are referred to as the RI group.
 
\noindent (25) $\{ \rI \}$

\noindent (26) $\{ \rI, \srII \}$

\noindent (27) $\{ \rI, \wrII \}$

\noindent (28)  $\{ \rI, \srII, \wrII \}$

\noindent (29) $\{ \rI, \srIII \}$

\noindent (30) $\{ \rI, \wrIII \}$

\noindent (31) $\{ \rI, \srIII, \wrIII \}$

\noindent (32) $\{ \rI, \wrII, \wrIII \}$

\noindent$\bullet$ {\bf{Condition 3.}} {\it{There exists an equivalence class containing both the knot projection $P_F$, as shown in Fig.~\ref{32f3}, and the trivial knot projection.}}  

A case satisfying Condition 3 is called {\it{flower trivial}}.
\begin{figure}[htbp]
\begin{center}
\includegraphics[width=2cm]{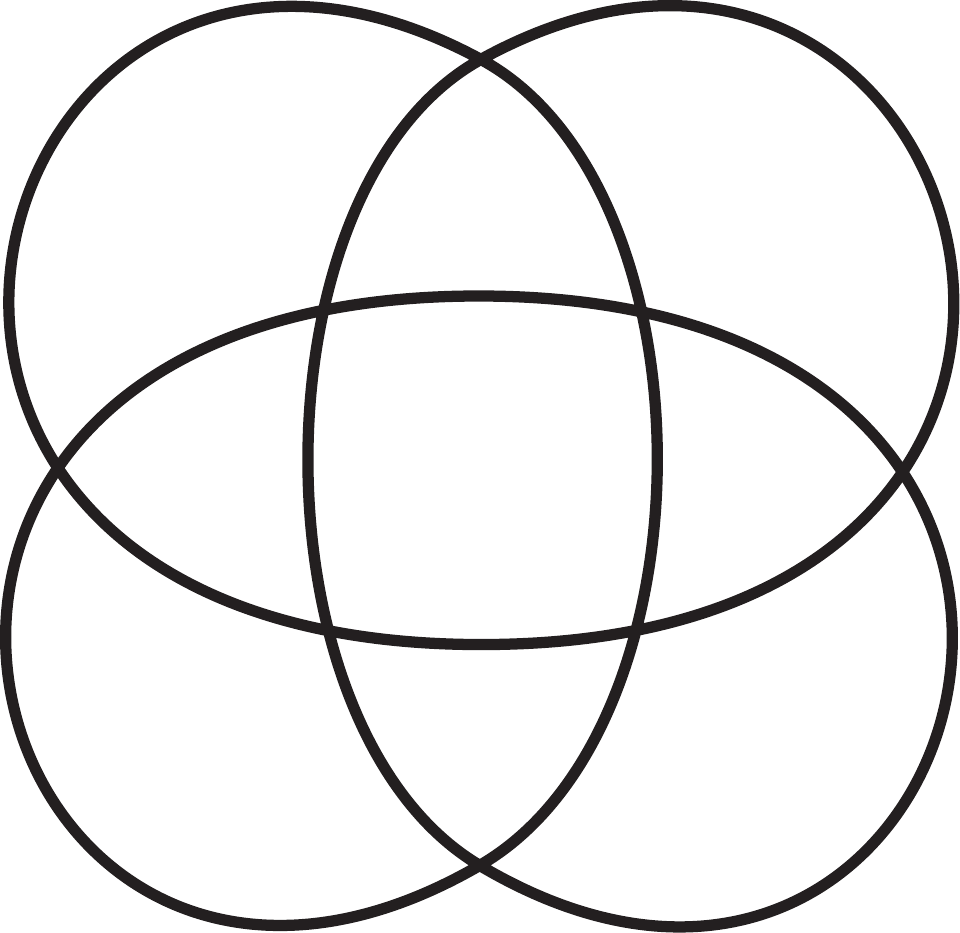}
\caption{$P_F$
%The knot projection with no $1$- and $2$-gons, called the flower knot projection
}
\label{32f3}
\end{center}
\end{figure}
We now classify the RI group into two subsets as shown in Table~\ref{table2} (note that all cases of the RI group satisfy the non-single triviality) by applying Fact~\ref{fact4}~(2) to (25), Fact~\ref{fact5} to (26), Fact~\ref{fact5} to (27), Fact~\ref{fact4}~(1) to (28), Fact~\ref{fact2} to (29), and Fact~\ref{fact1} to (30).  
\begin{table}[htbp]
\caption{RI group}
\begin{center}
\begin{tabular}{|c|c|} \hline
& RI group (non-single triviality) \\ \hline
non-flower triviality & (25), (26), (27), (28), (29), (30) \\ \hline
flower triviality & (31), (32) \\ \hline
\end{tabular}
\end{center}
\label{table2}
\end{table}

Because it is easier to classify the RI group than the non-RI group, we do that first.  An equivalence class containing $P$ is denoted by $[P]$.  
\subsection{Classification of the non-flower trivial cases in the RI group: (25), (26), (27), (28), (29), and (30)}
The knot projections, $4_1$ and $5_1$ are defined in Fig.~\ref{32f4}.    
\begin{figure}[htbp]
\begin{center}
\includegraphics[width=3cm]{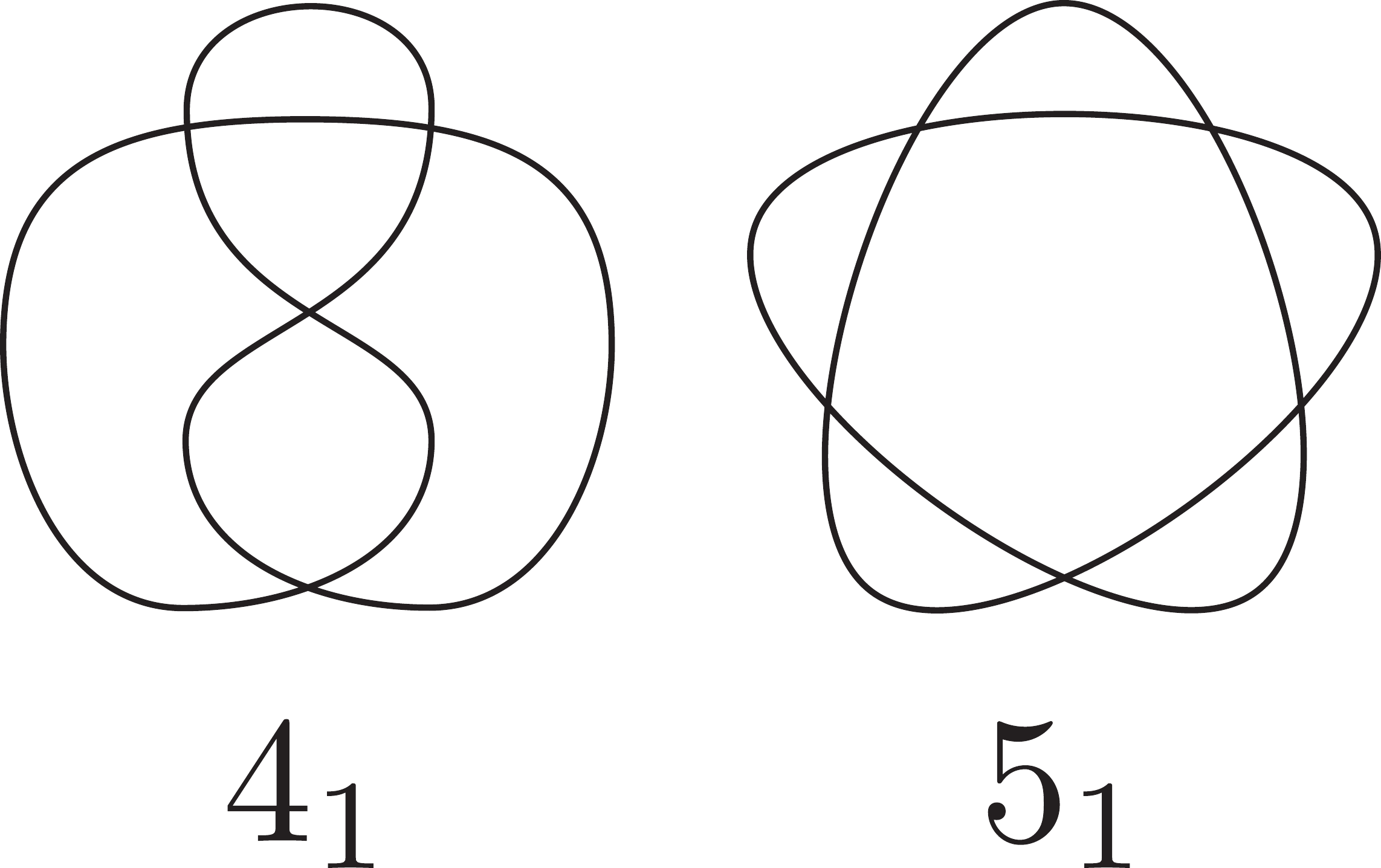}
\caption{$4_1$ and $5_1$}
\label{32f4}
\end{center}
\end{figure}
Table~\ref{table3} classifies the non-flower trivial cases in the RI group except for the pair (25) and (30).  By using the circle number (Fact~\ref{fact_circle}), $|\tau(3_1)|$ $\neq$ $|\tau(4_1)|$ for (25).  However, $[3_1]$ $=$ $[4_1]$ for (30).  
%We already show that (30) is different from the other case and (30) is nontrivial because $[3_1]$ $\neq$ $[O]$.  
\begin{table}[htbp]
\caption{Non-flower trivial RI group}\label{table3}
\begin{center}
\begin{tabular}{|c|c|c|c|c|}\hline
Case & \multicolumn{3}{|c|}{Formulae} & Key Fact \\ \hline
(25) & $[3_1] \neq [O]$ & $[4_1] \neq [O]$ & $[5_1] \neq [O]$&  Fact~\ref{fact4} (2) \\ \hline
(26) & $[3_1] \neq [O]$ & $[4_1] = [O]$ & $[5_1] \neq [O]$&  Fact~\ref{fact5}  \\ \hline
(27) & $[3_1] = [O]$ & $[4_1] \neq [O]$ & $[5_1] = [O]$&  Fact~\ref{fact5}  \\ \hline
(28) & $[3_1]$ $=$ $[O]$ &$[4_1]$ $=$ $[O]$ & $[5_1]$ $=$ $[O]$&  Fact~\ref{fact4} (1)  \\ \hline
(29) & $[3_1]$ $=$ $[O]$ & $[4_1] \neq [O]$ & $[5_1] \neq [O]$&  Fact~\ref{fact2}  \\ \hline
(30) & $[3_1]$ $\neq$ $[O]$ & $[4_1]$ $\neq$ $[O]$ & $[5_1]$ $\neq$ $[O]$ &  Fact~\ref{fact1}  \\ \hline
\end{tabular}
\end{center}
\end{table}%
\subsection{Classification of the flower trivial cases in the RI group: (31) and (32)}
%In the RI group, only (31) and (32) are required to be distinguished.  
The knot projection $7_4$ is defined in Fig.~\ref{32f5}.
\begin{figure}[htbp]
\begin{center}
\includegraphics[width=1cm]{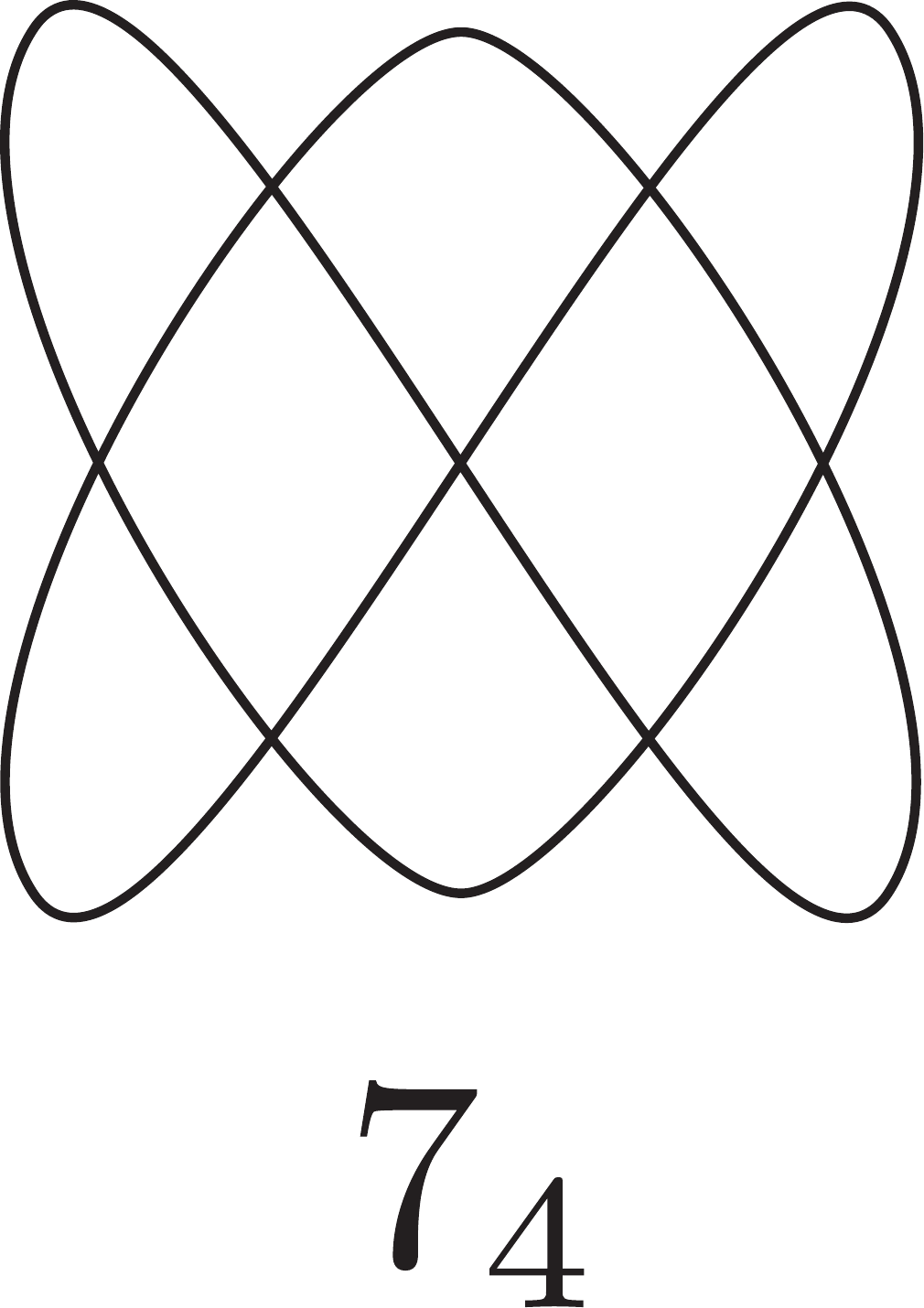}
\caption{$7_4$}
\label{32f5}
\end{center}
\end{figure}
We can see that $[7_4]$ $=$ $[O]$, under the equivalence relation of (31).  However, $[7_4] \neq [O]$, under the equivalence relation of (32) because $W(7_4)$ $=$ $2$ $\neq$ $0$ $=$ $W(O)$ (Fact~\ref{fact_w123}).  This completes the classification of the RI group.  
\subsection{Classification of the single trivial cases in the non-RI group: (16), (17), (19), (21), (22), (23), and (24)}
In this section, the curve appearing as $\infty$ is denoted by $\infty$.     
\begin{table}[htbp]
\caption{Non-RI group satisfying single triviality}
\begin{center}
\begin{tabular}{|c|c|c|c|} \hline
Case & \multicolumn{2}{|c|}{Formulae} & Properties \\ \hline
(16) & $[\infty]$ $=$ $[3_1]$ $=$ $[P_Y]$ & $[P_C] \neq [P_F]$ & $\coh^{\odd}(P_F) \neq \coh^{\odd}(P_C)$ \\
&&& (by Theorem~\ref{thm2}) \\ \hline
(17) & $[\infty]$ $=$ $[3_1]$ $=$ $[P_Y]$ & $[P_C] = [P_F]$ & \\ \hline
(19) & $[\infty]$ $=$ $[3_1]$ $\neq$ $[P_Y]$ & $s(3_1) \neq s(P_Y)$ & $c({P_F}^{2wr})=c(P_F)=8$ \\
& &(by Fact~\ref{fact3}) & ${P_F}^{2wr} = P_F$ (by Theorem~\ref{thm_s2w2}) \\ 
&&& If $[P]=[P_F], c(P) \ge 8$. \\ \hline
(21) & $[\infty]$ $=$ $[3_1]$ $\neq$ $[P_Y]$ & $s(3_1) \neq s(P_Y)$ & $\exists P$ s.t. $[P]=[P_F]$ and $c(P) < 8$ \\
&& (by Fact~\ref{fact3}) & \\ \hline
%\end{tabular}
%\begin{tabular}{|c|c|c|} \hline
(22) & $[\infty]$ $\neq$ $[3_1]$ $=$ $[P_Y]$ && $[P_F]$ has at least two elements. \\ \hline
(23) & $[\infty]$ $\neq$ $[3_1]$ $=$ $[P_Y]$ && $[P_F]$ consists of only $P_F$. \\ \hline
(24) & $[\infty]$ $\neq$ $[3_1]$ $\neq$ $[P_Y]$ && $[3_1]$ consists of only $3_1$. \\ \hline
\end{tabular}
\end{center}
\label{table2a}
\end{table}%
First, referring to Table~\ref{table2a}, we can see that  equivalence class $[3_1]$ contains (resp.~does not contain) the curve with the shape of $\infty$ under (16), (17), (19), or (21) (resp.~(22), (23), or (24)) because the number of double points $c(P)$ is invariant under any type of $\rIII$.  Here, let us consider the curve $P_Y$ with no $2$-gons, which is obtained from the trivial knot projection via three $\rI$s increasing double points, as shown in Fig.~\ref{32f8}.  
\begin{figure}[htbp]
\begin{center}
\includegraphics[width=2cm]{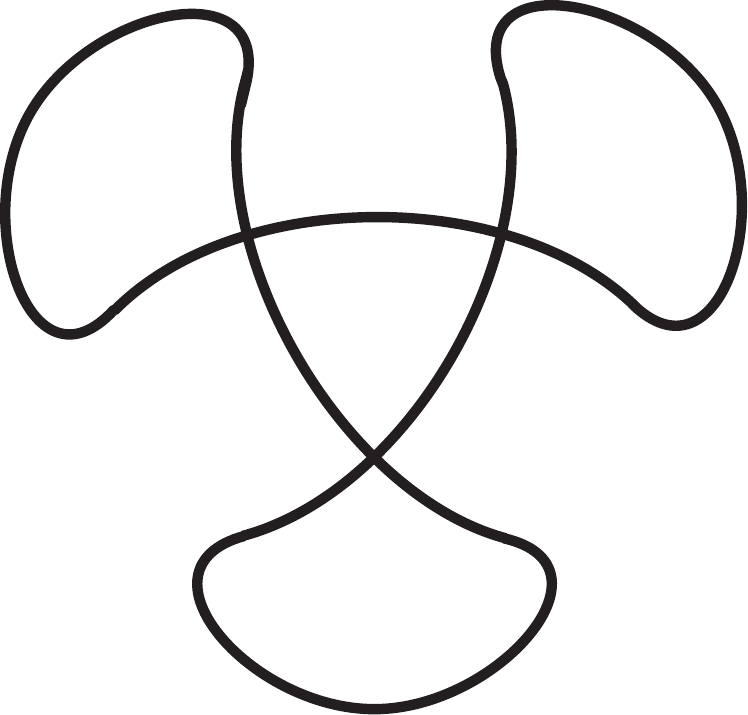}
\caption{$P_Y$}
\label{32f8}
\end{center}
\end{figure}
The curve $P_Y$ is (resp.~is not) an element in $[3_1]$ under (16) or (17) (resp.~(19) or (21)).  
%By Theorem~\ref{thm_w2s2}, ${P_Y}^{2wr}$ $=$ ${P_Y}$ and $[3_1]$ $=$ $[\infty]$ under (19) where $\infty$ indicates the curve that appears $\infty$.  
From Fact~\ref{fact3}, $s(P_Y)$ $=$ $4$ $\neq$ $2$ $=$ $s(3_1)$, under the equivalence relation of (19) or (21).  

Second, we distinguish between (16) and (17).  Let $P_C$ be the knot projection defined in Fig.~\ref{32f18}.  
\begin{figure}[htbp]
\begin{center}
\includegraphics[width=2cm]{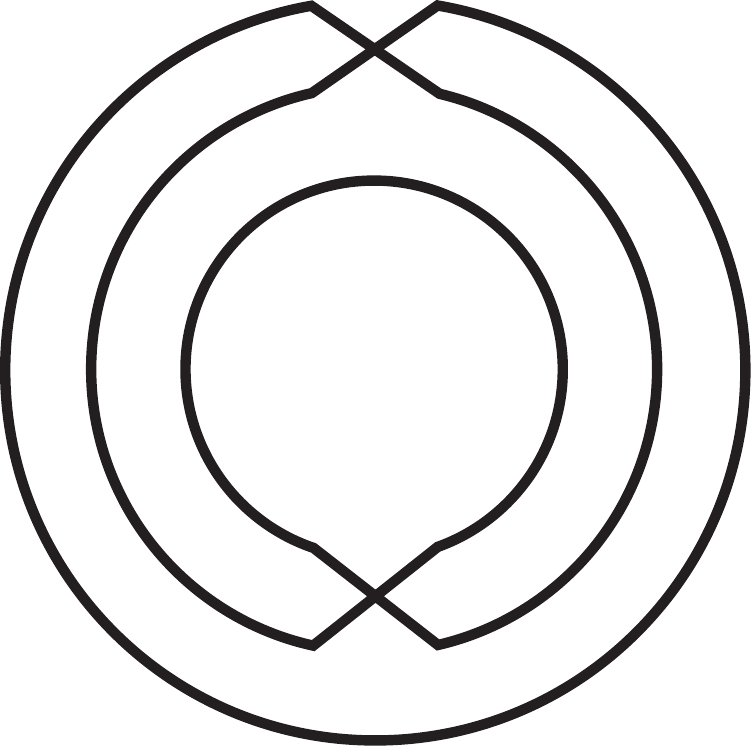}
\caption{$P_C$}
\label{32f18}
\end{center}
\end{figure}
For (16), $\coh^{\odd}(P_F)$ $=$ $0$ and $\coh^{\odd}(P_C)$ $=$ $1$, where $\coh^{\odd}(P)$ is invariant under $\wrII$ and $\srIII$ (i.e., $[P_F]$ $\neq$ $[P_C]$).  On the other hand, $[P_F]$ $=$ $[P_C]$ under the equivalence relation of (17).  

Third, we distinguish between (19) and (21), where (19) and (21) can be detected by the minimum number of double points in class $[P_F]$.  Here, note that from Theorem~\ref{thm_s2w2}, ${P_F}^{2wr}$ $=$ $P_F$ under (19), which implies that the minimum number of double points in the equivalence class is eight. However, $[P_F]$ contains the knot projection with two ($< 8$) double points under (21).

Finally, consider the remaining three equivalence relations (22), (23), and (24).  We notice that $[3_1]$ consists of a single element $3_1$ (resp.~at least two elements) under (24) (resp.~(22) or (23)) because $3_1$ does not have an incoherent (resp.~has a coherent) $3$-gon.  Further, for the flower knot projection $P_F$, $[P_F]$ consists of a single element $P_F$ (resp.~at least have two elements) under (23) (resp.~(22)) because $P_F$ does not have a coherent (resp.~have an incoherent) $3$-gon.  
\subsection{Classification of the non-single trivial cases in the non-RI group: (11), (14), (18), and (20)}
In this section, we say that the symbol $\infty$ indicates the curve with the shape of $\infty$.  Table~\ref{table4} shows the claim.
\begin{table}[htbp]
\caption{Non-RI group satisfying non-single triviality}
\begin{center}
\begin{tabular}{|c|c|c|c|} \hline
Case & Detection & Formulae & Key Fact \\ \hline
(11) & $[\infty] = [3_1] = [P_Y]$ & & \\ \hline
(14) & $[\infty] \neq [3_1] = [P_Y]$ & ${{J_S}^{+}} (\infty)=0 \neq 2 = {{J_S}^{+}} (3_1)$ & Fact~\ref{fact6} \\ \hline
(18) & $[\infty] \neq [3_1] \neq [P_Y]$ & ${3_1}^{2sr} = {3_1}$, ${P_Y}^{2sr} = P_Y$, $\infty^{2sr} = \infty$ & Theorem~\ref{thm_s2w2} \\ \hline
(20) & $[\infty] = [3_1] \neq [P_Y]$ & ${3_1}^{2r}$ $=$ $\infty$, ${P_Y}^{2r} = P_Y$ & Fact~\ref{fact4}~(2)  \\ \hline
\end{tabular}
\end{center}
\label{table4}
\end{table}%
%First, equivalence class $[3_1]$ contains (resp.~does not contain) $P_Y$ under (11) and (14) (resp.~(18) and (20)).  This is because ${3_1}^{2sr}$ $=$ $3_1$ and ${P_Y}^{2sr}$ $=$ $P_Y$ for (18) using Theorem~\ref{thm_s2w2} and ${3_1}^{2r}$ $=$ $\infty$ and ${P_Y}^{2r}$ $=$ $P_Y$ for (20) using Fact~\ref{fact4} (2).  Further, these formulae ${3_1}^{2sr}$ $=$ $3_1$ and ${3_1}^{2r}$ $=$ $\infty$ imply that $[3_1]$ contains (resp.~does not contain) the curve that appears $\infty$ under (20) (resp.~(18)) using Fact~\ref{fact4} (2) and Theorem~\ref{thm_s2w2}.   
%Second, equivalence class $[3_1]$ contains (resp.~does not contain) the curve that appears $\infty$ under (11) (resp.~(14)) by ${J^{+}_S}(3_1)=2$ $\neq$ $0={J^{+}_S}(\infty)$ (see Fact~\ref{fact6}).  
\hfill$\Box$

%\section*{Acknowledgements}
%N.~Ito would like to thank Professor Tsuyoshi Kobayashi providing an initiative to consider Problem~\ref{p1}.  
%The work of N.~Ito was partly supported by the JSPS Japanese-German Graduate Externships.  

\end{document}